\documentclass[11pt]{article}

\usepackage[a4paper,margin=29mm]{geometry}
\usepackage{amsmath,amssymb,amsthm,mathtools}
\usepackage{booktabs}
\usepackage{microtype}
\usepackage[hidelinks]{hyperref}

\hypersetup{
  pdftitle={Sampling and Moments of Reciprocal Quadratic Mellin Integrals},
  pdfauthor={K. Srinivasa Raghava},
  pdfsubject={Reciprocal Euler--Mellin integrals},
  pdfkeywords={Mellin transform, algebraic generating function, Hausdorff moment, Hankel determinant, Laplace method, hyperlogarithm}
}

\newtheorem{theorem}{Theorem}[section]
\newtheorem{proposition}[theorem]{Proposition}
\newtheorem{lemma}[theorem]{Lemma}
\newtheorem{corollary}[theorem]{Corollary}
\theoremstyle{remark}
\newtheorem{remark}[theorem]{Remark}

\newcommand{\dd}{\,\mathrm d}
\newcommand{\PV}{\operatorname{PV}}
\newcommand{\Li}{\operatorname{Li}}
\newcommand{\sech}{\operatorname{sech}}
\newcommand{\csch}{\operatorname{csch}}
\newcommand{\Qbar}{\overline{\mathbb Q}}

\title{Sampling and Moments of Reciprocal Quadratic Mellin Integrals}
\author{K. Srinivasa Raghava\\
\small Pie Mathematics Association\\
\small \href{mailto:srinivasaraghavak@gmail.com}{srinivasaraghavak@gmail.com}}
\date{}

\begin{document}

\maketitle

\begin{abstract}
We study logarithmic Mellin integrals attached to a reciprocal quadratic
rational function.  When the rational kernel has algebraic coefficients,
positive-integer samples of an entire deformation have a generating function
which, after division by \(\pi\), is algebraic for every
\(a\in\mathbb Q\) with \(0<|a|<1\).  For the principal half-weight
example we determine the minimal quartic, complete finite branch locus,
minimal differential equation, polynomial recurrence, and coefficient
asymptotics.  Throughout the saddle region
\(c>1\), \(-2\sqrt c<b<0\), positive hyperbolic formulas for real
\(0<a<1\) give determinate Hausdorff moment sequences, strict total
positivity, and monotone quotient limits.  For real \(|a|<1\), a general
coefficient asymptotic in the same region shows that one maximum controls
both the sampling radius and the high logarithmic moments.  On the
fixed-critical locus, two-term diagonal asymptotics give a normalized product
tending to \(\pi\); division by the displayed correction \(1+\gamma/M\)
gives an \(O(M^{-2})\) approximation.
\end{abstract}

\noindent\textbf{Keywords.}
Mellin transform; algebraic generating function; P-recursive sequence;
Hausdorff moment; Hankel determinant; Laplace method; hyperlogarithm.

\medskip
\noindent\textbf{2020 Mathematics Subject Classification.}
Primary 44A15; Secondary 05A15, 26D15, 30B40, 41A60, 33C65.

\section{Introduction}

For \(a\in\mathbb C\) and an integer \(n\geq1\), consider
\begin{equation}\label{eq:original}
 \Psi(a,n)=\int_{-1}^{1}\frac1x
 \left(\frac{1+x}{1-x}\right)^a
 \log^n\!\left(\frac{2x^2+2x+1}{2x^2-2x+1}\right)\dd x.
\end{equation}
All powers and logarithms on \((-1,1)\) use the real logarithm.  The
Cayley substitution \(t=(1+x)/(1-x)\) places this integral in the family
\begin{equation}\label{eq:R}
R_{c,b}(t)=\frac{ct^2+bt+1}{t^2+bt+c},\quad
c>1,\quad b>-2\sqrt c.
\end{equation}
Indeed, \eqref{eq:original} is the case \((c,b)=(5,-2)\) of
\begin{equation}\label{eq:Psi}
 \Psi_{c,b}(a,n)=2\int_0^\infty
 \frac{t^a\log^n R_{c,b}(t)}{t^2-1}\dd t.
\end{equation}
The conditions in \eqref{eq:R} make both quadratics positive on the positive
axis, and
\begin{equation}\label{eq:reciprocity-R}
 R_{c,b}(1/t)=R_{c,b}(t)^{-1}.
\end{equation}

Two sequences arise from the same kernel but answer different questions.
The first consists of the positive-integer values in \(s\) of the entire deformation
\begin{equation}\label{eq:G}
 G_{c,b}(a,s)=2\int_0^\infty
 \frac{t^a\{R_{c,b}(t)^s-1\}}{t^2-1}\dd t.
\end{equation}
The second consists of its derivatives at the origin,
\begin{equation}\label{eq:derivatives}
 \Psi_{c,b}(a,n)=
 \left.\frac{\partial^n}{\partial s^n}G_{c,b}(a,s)\right|_{s=0}.
\end{equation}
The sampling results below concern \(G_{c,b}(a,k)\), whereas the moment and
diagonal results concern \(\Psi_{c,b}(a,n)\).  Their common geometry is the
range of \(R_{c,b}\): in the region \eqref{eq:Omega}, its maximum gives both
the dominant singularity of the sampling series and the saddle of the high
logarithmic moments; see Theorems~\ref{thm:general-saddle} and
\ref{thm:general-sample-asymptotic}.

Mellin continuation, partial fractions, Euler's beta integral, and Laplace's
method are classical tools; see
\cite{AndrewsAskeyRoy1999,FlajoletGourdonDumas1995,ParisKaminski2001,
NilssonPassare2013,BerkeschForsgardPassare2014,Olver1997,Wong2001}.
Algebraicity implies differential finiteness and P-recursiveness by standard
closure theorems \cite{Stanley1980,FlajoletSedgewick2009,Salvy2019}.
Holonomic elimination and recurrence extraction provide the algorithmic
background for the explicit example
\cite{AlmkvistZeilberger1990,Zeilberger1990,Chyzak2000}.  Likewise, the
passage from factored rational functions to iterated integrals
is standard in hyperlogarithmic integration
\cite{Chen1977,Goncharov1998,Brown2009,Panzer2015}.  The new content lies in
the exact formulas and consequences for the reciprocal family \eqref{eq:R}.
The positive results belong to the classical moment and total-positivity
setting \cite{ShohatTamarkin1943,Karlin1968}; related parameter-dependent
log-convexity methods appear in \cite{Baricz2008,KalmykovKarp2013}.

For clarity, the contribution map is the following.
\begin{enumerate}
 \item The Mellin continuation, geometric summation, partial-fraction
 evaluation, holonomic closure, moment criteria, and Laplace method are
 classical tools.  The general rational-kernel identity in
 Theorem~\ref{thm:general-sampling} isolates this standard mechanism.
 \item The implications ``algebraic generating function \(\Rightarrow\)
 D-finite function \(\Rightarrow\) P-recursive coefficients'' are standard.
 They are used only after the family-specific algebraic functions have been
 derived explicitly.
 \item The family-specific results are the radical sampling law in
 Theorem~\ref{thm:quadratic-sampling}; the quartic cover, order-two equation,
 recurrence, and dominant coefficient asymptotic in
 Theorems~\ref{thm:effective-algebraicity}--\ref{thm:sample-asymptotic}; the
 strict Hausdorff and total-positivity theorem
 \ref{thm:hausdorff}; the fixed-critical classification and general sampling
 asymptotic in Proposition~\ref{prop:squaring-locus} and
 Theorems~\ref{thm:general-saddle}--\ref{thm:general-sample-asymptotic}; and
 the corrected two-term \(\pi\)-limit in
 Theorem~\ref{thm:two-term-asymptotics} and
 Corollary~\ref{cor:pi-general}.  To the best of the author's knowledge,
 these explicit results for \eqref{eq:R} have not appeared previously.
\end{enumerate}

The paper is organized around these two structures.  Section~\ref{sec:setup}
sets up the reciprocal Euler--Mellin integral.  Sections~\ref{sec:sampling}
and \ref{sec:effective} establish algebraic sampling and the principal
half-weight example.  Section~\ref{sec:shift} records the meromorphic shift
equation.  Section~\ref{sec:moments} develops the moment and total-positivity
consequences.  Sections~\ref{sec:saddle} and \ref{sec:asymptotic} connect the
extremal geometry, sampling coefficients, and diagonal limits.  The fixed
Mellin regularization, rational-weight iterated integrals, and one
representative logarithmic moment are placed in the appendices.

\section{Reciprocal Euler--Mellin kernels}\label{sec:setup}

\begin{lemma}\label{lem:convergence}
For every integer \(n\geq1\), the integrals \eqref{eq:original} and
\eqref{eq:Psi} are ordinary, absolutely convergent improper integrals if and
only if
\begin{equation}\label{eq:strip}
 -1<\operatorname{Re}a<1.
\end{equation}
They are not principal-value integrals.
\end{lemma}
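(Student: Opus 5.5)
The plan is to analyse the integrand of \eqref{eq:Psi} near its only possible trouble points \(t=0\), \(t=1\), \(t=\infty\). This integral is the Cayley-transformed form of \eqref{eq:original}: with \(t=(1+x)/(1-x)\) we have \(\dd t=2\dd x/(1-x)^2\) and
\[
t^2-1=\frac{4x}{(1-x)^2},
\]
so \(2\dd t/(t^2-1)=\dd x/x\). In addition, \(R_{5,-2}(t)=(2x^2+2x+1)/(2x^2-2x+1)\) by direct substitution. Since the substitution is a smooth bijection of \((-1,1)\) onto \((0,\infty)\), absolute convergence of one integral is equivalent to absolute convergence of the other, and it suffices to treat \(\Psi_{c,b}\) for general \((c,b)\) satisfying \eqref{eq:R}. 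Throughout, \(|t^a|=t^{\operatorname{Re}a}\), so only \(\sigma=\operatorname{Re}a\) matters.

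\textbf{The point \(t=1\).} By \eqref{eq:reciprocity-R}, \(R(1)=1\), so \(\log R(t)\) has a zero at \(t=1\). It is analytic there because both quadratics are positive on \((0,\infty)\). Hence \(\log^n R(t)/(t^2-1)=O(|t-1|^{n-1})\), which is bounded for \(n\ge1\). The integrand is therefore locally integrable at \(t=1\) with no cancellation needed, and this is why no principal value is required. (When \(\log R\) has a simple zero at \(t=1\) and \(n=1\), the quotient tends to a finite nonzero limit; in every case it is bounded.)

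\textbf{The endpoints \(t=0\) and \(t=\infty\).} As \(t\to0\), \(R(t)\to1/c\), so \(\log^n R\to(-\log c)^n\neq0\) since \(c>1\). The integrand is then asymptotic to a nonzero constant times \(t^{a}\), and \(\int_0 t^{\sigma}\dd t\) converges if and only if \(\sigma>-1\). As \(t\to\infty\), \(R(t)\to c\), and the integrand behaves like \((\log c)^n t^{a-2}\), which is integrable at infinity if and only if \(\sigma<1\). Alternatively, the reciprocity \(t\mapsto1/t\) maps the behaviour at infinity onto that at zero with \(a\mapsto-a\).

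The necessity direction is where the only real care is needed: one must use that the limits \((\mp\log c)^n\) are nonzero, which is where \(c>1\) enters, to obtain the lower bound \(|\text{integrand}|\ge C t^{\sigma}\) near \(0\) and \(\ge Ct^{\sigma-2}\) near \(\infty\). Divergence of the absolute integral outside the strip \eqref{eq:strip} then follows. For \eqref{eq:original}, the endpoints \(x=\pm1\) correspond to \(t=\infty\) and \(t=0\), which gives the same conditions.
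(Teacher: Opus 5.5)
Your proof is correct and follows essentially the paper's route. You do local analysis at $t=0,1,\infty$: the nonzero limits $(\mp\log c)^n$ give two-sided endpoint comparisons with $t^{\operatorname{Re}a}$ and $t^{\operatorname{Re}a-2}$, and the zero of $\log R_{c,b}$ at $t=1$ makes the integrand bounded there, so no principal value is needed. The only difference is that you transfer \eqref{eq:original} to the case $(c,b)=(5,-2)$ of \eqref{eq:Psi} by the Cayley substitution, whereas the paper checks it directly on $(-1,1)$ via $L(x)=4x+O(x^3)$ and the endpoint factors $(1\mp x)^{\mp\operatorname{Re}a}$; the paper also remarks that at $\operatorname{Re}a=\pm1$ even the non-absolute improper integral fails, which the lemma as stated does not require.
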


\begin{proof}
For \eqref{eq:original}, write
\[
 L(x)=\log\frac{2x^2+2x+1}{2x^2-2x+1}
 =2\operatorname{artanh}\frac{2x}{1+2x^2}.
\]
Then
\[
 L(x)=4x-\frac83x^3+O(x^5),\quad x\to0.
\]
Thus the apparent singularity at zero is removable.  As \(x\to1^-\), the
absolute integrand is a nonzero constant times
\((1-x)^{-\operatorname{Re}a}(1+o(1))\); as \(x\to-1^+\), it is a nonzero
constant times \((1+x)^{\operatorname{Re}a}(1+o(1))\).  The two endpoint
tests give \eqref{eq:strip}, and at equality the endpoint primitive has no
finite limit.

For \eqref{eq:Psi}, \(\log R_{c,b}(t)\) has a simple zero at \(t=1\).
At zero it tends to \(-\log c\), and at infinity it tends to \(\log c\).
The absolute integrand is therefore \(O(t^{\operatorname{Re}a})\) at zero,
\(O(t^{\operatorname{Re}a-2})\) at infinity, and bounded at \(t=1\).
The same two conditions follow.
\end{proof}

\begin{proposition}\label{prop:analytic}
Under \eqref{eq:R} and \eqref{eq:strip}, the function
\(s\mapsto G_{c,b}(a,s)\) is entire, locally uniformly in \((a,s)\) on the
strip times \(\mathbb C\).  Moreover,
\begin{align}
 G_{c,b}(a,s)&=-G_{c,b}(-a,-s),\label{eq:G-reciprocity}\\
 \Psi_{c,b}(-a,n)&=(-1)^{n+1}\Psi_{c,b}(a,n).\label{eq:Psi-parity}
\end{align}
As a function of \(a\), \(G_{c,b}(a,s)\) extends meromorphically to
\(\mathbb C\); its only possible poles are the nonzero integers.
\end{proposition}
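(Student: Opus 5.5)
The plan is to treat the three assertions separately: entireness in \(s\), the two symmetry identities, and meromorphic continuation in \(a\). Each rests on bounding the integrand of \eqref{eq:G} near \(0\), \(1\) and \(\infty\).

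\emph{Entireness.} Put \(\ell(t)=\log R_{c,b}(t)\). It is real-analytic on \((0,\infty)\), bounded by some \(L\), and has a simple zero at \(t=1\); its limits are \(-\log c\) at \(0\) and \(\log c\) at \(\infty\). Write \(R^s-1=e^{s\ell}-1\). For \(|s|\le S\),
\[
|R^s-1|\le S|\ell(t)|e^{SL}.
\]
Hence \(\{R^s-1\}/(t^2-1)\) is bounded near \(t=1\) and is \(O(1)\) at \(0\) and at \(\infty\). The integrand is therefore dominated by \(C\,t^{\operatorname{Re}a}\) near \(0\) and by \(C\,t^{\operatorname{Re}a-2}\) near \(\infty\), exactly as in Lemma~\ref{lem:convergence}. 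This dominating function is integrable locally uniformly for \(a\) in compact subsets of the strip \eqref{eq:strip} and \(|s|\le S\). The integrand is entire in \((a,s)\) for each \(t\), so Morera's theorem together with Fubini (or differentiation under the integral sign) gives joint holomorphy on the strip times \(\mathbb C\). This also justifies \eqref{eq:derivatives}, since \(\partial_s^n\) at \(s=0\) produces \(\ell^n\).

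\emph{Symmetries.} Substitute \(t\mapsto1/t\) and use \eqref{eq:reciprocity-R}:
\[
\frac{\dd t}{t^2-1}\mapsto\frac{-t^{-2}\dd t}{t^{-2}-1}=\frac{\dd t}{t^2-1}\cdot(-1)\cdot\frac{1}{-1}\,,
\]
where the orientation is reversed and \(t^a\mapsto t^{-a}\). Carefully, \(\int_0^\infty t^{-a}\{R^{-s}-1\}\,\frac{t^{-2}}{t^{-2}-1}\,\frac{\dd t}{1}\) with \(t^{-2}/(t^{-2}-1)=1/(1-t^2)\). This gives \(G(a,s)=-G(-a,-s)\). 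Differentiating \(n\) times in \(s\) at \(s=0\) then yields \(\Psi(a,n)=-(-1)^n\Psi(-a,n)\), which is \eqref{eq:Psi-parity}.

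\emph{Meromorphic continuation.} Split the integral into \(\int_0^1+\int_1^\infty\), and map the second piece to \((0,1)\) by \(t\mapsto1/t\). Both pieces then become integrals \(\int_0^1 t^{\pm a}h_\pm(t)\dd t\), where \(h_\pm\) is analytic on \([0,1]\); this is the step to check carefully. Near \(t=1\) the function \(h_\pm\) is fine because \(\ell\) has a simple zero there. Near \(t=0\), \(R^{s}\) is analytic, with \(R(0)=1/c\) and \(R(1/t)\) analytic at \(0\). The factor \(1/(t^2-1)\) contributes an expansion of the form \(h_+(t)=\sum_k \alpha_k t^k\), and \(h_-\) carries an extra \(t^{-2}\cdot t^{2}\), so both are regular power series. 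Standard Mellin continuation follows: subtract the Taylor polynomial of degree \(N\) and integrate the monomials explicitly, which gives terms \(\alpha_k/(k+1\pm a)\). The first piece therefore has possible poles only at \(a=-1,-2,\dots\) and the second only at \(a=1,2,\dots\). Since \(N\) is arbitrary, the continuation is meromorphic on all of \(\mathbb C\) with possible poles only at nonzero integers.

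The main obstacle I expect is the bookkeeping in the \(t\mapsto1/t\) change for the second piece. The point to confirm is that the resulting weight is \(t^{-a}\) times a function analytic at \(0\), with no stray \(t^{-1}\) that would create a pole at \(a=0\). Checking this shows that \(1/(t^2-1)\) contributes \(t^2/(1-t^2)\) and the Jacobian contributes \(t^{-2}\). The product is analytic, so \(a=0\) is regular, consistent with the strip result.
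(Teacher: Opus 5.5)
Your proposal is correct and follows essentially the paper's proof: uniform endpoint majorants give entireness in \(s\), the inversion \(t\mapsto1/t\) with \eqref{eq:reciprocity-R} gives the reciprocity and parity identities, and subtracting endpoint Taylor terms gives the Mellin continuation with poles only at nonzero integers. Folding \((1,\infty)\) onto \((0,1)\) is just the paper's expansion at infinity written in the coordinate \(1/t\). The Jacobian display in your symmetry step is garbled, but the sign you conclude, \(G_{c,b}(a,s)=-G_{c,b}(-a,-s)\), is correct.
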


\begin{proof}
The real function \(\log R_{c,b}\) is bounded on \((0,\infty)\).  On a
compact set of \(s\)-values, every \(s\)-derivative of the integrand in
\eqref{eq:G} is bounded by one integrable endpoint majorant from
Lemma~\ref{lem:convergence}.  Dominated convergence proves entireness and
local uniformity.  Substitution \(t=1/u\), followed by
\eqref{eq:reciprocity-R}, gives \eqref{eq:G-reciprocity}; differentiation at
\(s=0\) gives \eqref{eq:Psi-parity}.

Uniformly for \(s\) in a compact set, convergent endpoint expansions have
the form
\begin{align}
 \frac{2\{R_{c,b}(t)^s-1\}}{t^2-1}
 &\sim\sum_{m\geq0}A_m(s)t^m,&&t\to0,\label{eq:zero-expansion}\\
 \frac{2\{R_{c,b}(t)^s-1\}}{t^2-1}
 &\sim\sum_{m\geq0}B_m(s)t^{-m-2},&&t\to\infty,\label{eq:infty-expansion}
\end{align}
where the coefficients are entire in \(s\).  Subtracting the first \(N\)
terms before integration continues the Mellin transform across successive
vertical lines.  The added monomial integrals have denominators
\(a+m+1\) and \(a-m-1\).  Hence the only candidate poles are
\(-1,-2,\ldots\) and \(1,2,\ldots\).  This is the standard Mellin
continuation theorem \cite{FlajoletGourdonDumas1995,ParisKaminski2001}.
\end{proof}

Entireness now justifies all exponential generating identities used below.
For example,
\begin{equation}\label{eq:egf}
 G_{c,b}(a,s)=\sum_{n\geq1}\frac{\Psi_{c,b}(a,n)}{n!}s^n,
\end{equation}
locally uniformly for \(s\in\mathbb C\).

\section{Algebraic sampling}\label{sec:sampling}

We first isolate the general reason for algebraicity.  The reciprocal
quadratic formula will then be a radical specialization.

\begin{theorem}\label{thm:general-sampling}
Let \(P,Q\in\mathbb R[t]\) have the same degree \(d\geq2\), positive
constant and leading coefficients, and no zeros on \((0,\infty)\).  Assume
that
\begin{equation}\label{eq:rational-hypotheses}
 P(1)=Q(1)>0,\quad
 U(t)=\frac{P(t)-Q(t)}{t^2-1}\in\mathbb R[t].
\end{equation}
Put \(R=P/Q\) and, for \(-1<\operatorname{Re}a<1\),
\[
 G_R(a,s)=2\int_0^\infty
 \frac{t^a\{R(t)^s-1\}}{t^2-1}\dd t.
\]
For sufficiently small \(z\), write the confluent partial-fraction expansion
\begin{equation}\label{eq:general-partial-fractions}
 \frac{2zU(t)}{(1-z)\{Q(t)-zP(t)\}}
 =\sum_{\rho}\sum_{j=1}^{m_\rho}
 \frac{C_{\rho,j}(z)}{(t-\rho(z))^j},
\end{equation}
where \(\rho\) runs through the distinct roots of \(Q-zP\), and
\(m_\rho\) is the multiplicity of \(\rho\).  Then
\begin{align}
 \sum_{k\geq1}G_R(a,k)z^k
 &=-\frac{\pi}{\sin(\pi a)}
 \sum_{\rho}\sum_{j=1}^{m_\rho}
 C_{\rho,j}(z)(-\rho(z))^{a+1-j}
 \frac{(-a)_{j-1}}{(j-1)!}.
 \label{eq:general-sampling}
\end{align}
The identity is first read for \(-1<\operatorname{Re}a<0\) and then by
analytic continuation throughout the strip, with the removable value at
\(a=0\).  The root powers are defined by local germs.  For \(|z|\) small,
\(Q-zP\) has no positive root.  At each point over \(z=0\), choose the
local germ \(\log(-\rho(z))\) whose value at \(z=0\) has imaginary part in
\((-\pi,\pi)\).  Continue that germ along paths on the normalization of
\(Q(t)-zP(t)=0\), equivalently on its universal cover, while the root avoids
the integration path \([0,\infty)\).  Then
\((-\rho)^a=\exp\{a\log(-\rho)\}\).  No global single-valued logarithm on
the normalization is asserted.  If \(a\) is rational, the power descends to
a finite algebraic cover.  At a collision,
\eqref{eq:general-sampling} is the symmetric trace of the confluent
expression on that normalization; it does not depend on a labeling of the
roots.

If the coefficients of \(P,Q\) are algebraic and
\(a\in\mathbb Q\cap(-1,1)\setminus\{0\}\), then
\begin{equation}\label{eq:general-algebraicity}
 \frac1\pi\sum_{k\geq1}G_R(a,k)z^k
\end{equation}
is algebraic over \(\Qbar(z)\).  Its coefficients lie in one finite number
field and satisfy a linear recurrence with polynomial coefficients in that
field.
\end{theorem}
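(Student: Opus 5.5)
The plan is to reduce the generating function to one Mellin integral of a rational function of \(t\), and then evaluate that integral term by term using the partial fractions \eqref{eq:general-partial-fractions}.

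\textbf{Step 1: geometric summation.} For integer \(k\geq1\) we have \(R^k-1\), and
\[
\sum_{k\geq1}\{R(t)^k-1\}z^k=\frac{zR}{1-zR}-\frac{z}{1-z}=\frac{z(R-1)}{(1-z)(1-zR)}.
\]
With \(R=P/Q\) and \(P-Q=(t^2-1)U\), this gives
\[
\frac{2}{t^2-1}\sum_{k\geq1}\{R^k-1\}z^k=\frac{2zU(t)}{(1-z)\{Q(t)-zP(t)\}},
\]
which is exactly the left side of \eqref{eq:general-partial-fractions}.

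To move the sum inside the integral, I would use that \(R\) is positive and continuous on \([0,\infty]\), with limits \(P(0)/Q(0)\) and the ratio of leading coefficients. Hence \(M=\sup_{(0,\infty)}R<\infty\). For \(|z|<\min(1,1/M)\), each term is dominated by \(C(M|z|)^k\) times the integrable majorant of Lemma~\ref{lem:convergence}; near \(t=1\) one uses \(|R^k-1|\le k\max(M,1)^{k}|\log R|\) and the simple zero of \(\log R\). Dominated convergence then gives
\[
\sum_k G_R(a,k)z^k=\int_0^\infty t^a\,\frac{2zU}{(1-z)(Q-zP)}\dd t .
\]
The same smallness of \(z\) keeps the roots of \(Q-zP\) off \([0,\infty)\), since \(Q-zP\approx Q\) has no zeros on \((0,\infty)\) and \(Q(0)>0\). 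Because \(\deg U=d-2\) and \(\deg(Q-zP)=d\) for small \(z\), the integrand is \(O(t^{a-2})\) at infinity.

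\textbf{Step 2: termwise Mellin evaluation.} Restrict first to \(-1<\operatorname{Re}a<0\), so that each summand \(t^a(t-\rho)^{-j}\) is integrable on its own. Put \(w=-\rho\). The base formula is
\[
\int_0^\infty \frac{t^a}{t+w}\dd t=-\frac{\pi}{\sin\pi a}\,w^a,
\]
which is Euler's beta integral for \(w>0\) and extends to \(w\notin(-\infty,0]\) by analytic continuation in \(w\) with the principal branch. Writing \((t+w)^{-j}=\frac{(-1)^{j-1}}{(j-1)!}\partial_w^{j-1}(t+w)^{-1}\) and differentiating under the integral produces the factor \((-1)^{j-1}a(a-1)\cdots(a-j+2)=(-a)_{j-1}\). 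This yields \eqref{eq:general-sampling}.

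The branch of \((-\rho)^a\) is the one obtained by continuing the principal germ at \(z=0\) along paths on which \(\rho\) avoids \([0,\infty)\). The only freedom is in \(w^a\), and this prescription is precisely the germ convention in the statement. To pass to the full strip, observe that the left side is analytic in \(a\) on \(-1<\operatorname{Re}a<1\) by Proposition~\ref{prop:analytic}. On the right, the decay \(O(t^{-2})\) forces \(\sum_\rho C_{\rho,1}=0\), so the pole of \(\pi/\sin\pi a\) at \(a=0\) cancels and the right side is analytic there as well. Analytic continuation then gives the identity on the whole strip.

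\textbf{Step 3: collisions.} At a value of \(z\) where roots of \(Q-zP\) collide, I would argue that the left side is analytic in \(z\). The right side is a symmetric function of the roots, so it descends to the confluent expression taken on the normalization. This step is bookkeeping rather than a new estimate.

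\textbf{Step 4: algebraicity and recurrence.} Each root \(\rho(z)\) is algebraic over \(\Qbar(z)\). The coefficients \(C_{\rho,j}\) are polynomial expressions, over \(\Qbar(z)\), in \(\rho\) and in the Laurent coefficients of a rational function of \(t\), so they are algebraic too. For \(a=p/q\), the power \((-\rho)^{a}\) is a root of \(X^q=(-\rho)^p\), hence algebraic. The prefactor \(1/\sin(\pi a)\) is an algebraic number for rational \(a\). Since finite sums and products of algebraic functions are algebraic, the expression \eqref{eq:general-algebraicity} is algebraic over \(\Qbar(z)\).

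By the closure theorems cited earlier \cite{Stanley1980,FlajoletSedgewick2009}, an algebraic function is D-finite, and the differential operator can be chosen over the field \(K(z)\), where \(K\) is the number field generated by the coefficients of the minimal polynomial. Its power-series coefficients then satisfy a P-recursion over \(K\). The finitely many initial coefficients lie in a finite extension of \(K\), since they are determined by Newton--Puiseux from that minimal polynomial. Hence all coefficients lie in one number field.

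\textbf{Main obstacle.} I expect the delicate step to be the branch bookkeeping in Steps 2 and 3. One must check that the germ continued on the normalization agrees with the principal branch forced by the integral, for \(z\) small and off collision points, and that the confluent trace at collisions is well defined. My first approach would be to verify this agreement near \(z=0\), where all roots stay in a fixed compact set away from \([0,\infty)\), and then invoke uniqueness of analytic continuation.
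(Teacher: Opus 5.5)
Your proposal is correct and follows the paper's proof essentially step for step: geometric summation with a uniform geometric majorant to interchange sum and integral, then termwise Euler--Mellin evaluation for \(-1<\operatorname{Re}a<0\) with \(j-1\) differentiations in the root, then continuation in \(a\), and finally algebraicity via closure properties followed by the standard implications algebraic \(\Rightarrow\) D-finite \(\Rightarrow\) P-recursive. Your explicit check that \(\sum_\rho C_{\rho,1}=0\) (from the \(O(t^{-2})\) decay) cancels the pole of \(\pi/\sin(\pi a)\) at \(a=0\) is a useful detail the paper leaves implicit; note only that in this generality \(\log R\) need not have a \emph{simple} zero at \(t=1\), but what you actually use, boundedness of \(\log R(t)/(t^2-1)\) near \(t=1\), still holds.
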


\begin{proof}
Since \(R\) is positive and has positive finite limits at zero and infinity,
it is bounded above and below on the positive axis.  For \(z\) small enough,
\(|z|\sup_{t>0}R(t)<1\); hence \(Q-zP=Q(1-zR)\) has no positive root.
For the same range of \(z\),
geometric summation gives
\begin{align*}
 \sum_{k\geq1}\{R(t)^k-1\}z^k
 &=\frac{zR(t)}{1-zR(t)}-\frac{z}{1-z}\\
 &=\frac{z\{P(t)-Q(t)\}}
 {(1-z)\{Q(t)-zP(t)\}}.
\end{align*}
The cancellation in \eqref{eq:rational-hypotheses} removes the point
\(t=1\).  The resulting majorant is \(O(t^{\operatorname{Re}a})\) at zero
and \(O(t^{\operatorname{Re}a-2})\) at infinity.  A geometric majorant,
uniform in \(t\), therefore permits the interchange by Fubini's theorem and
gives
\begin{equation}\label{eq:general-sampling-integral}
 \sum_{k\geq1}G_R(a,k)z^k
 =\int_0^\infty t^a
 \frac{2zU(t)}{(1-z)\{Q(t)-zP(t)\}}\dd t.
\end{equation}
The rational function in \eqref{eq:general-sampling-integral} is proper of
order \(O(t^{-2})\).  Insert \eqref{eq:general-partial-fractions} and use
Euler's Mellin integral
\begin{equation}\label{eq:confluent-mellin}
 \int_0^\infty\frac{t^a}{(t-\rho)^j}\dd t
 =-\frac{\pi(-\rho)^{a+1-j}(-a)_{j-1}}
 {(j-1)!\sin(\pi a)}.
\end{equation}
For \(j=1\), this is Euler's beta integral; differentiation \(j-1\) times
with respect to \(\rho\) gives the general case.  The calculation is
absolutely convergent term by term when \(-1<\operatorname{Re}a<0\).
Mellin continuation gives the equality on the full strip.  Confluent partial
fractions prove at the same time that repeated roots cause no loss of
holomorphy.

If the data and \(a\) are algebraic and rational, respectively, every root,
partial-fraction coefficient, Pochhammer symbol, and rational power on the
right of \eqref{eq:general-sampling} is algebraic over \(\Qbar(z)\).
Moreover, \(\sin(\pi a)\in\Qbar\).  This proves
\eqref{eq:general-algebraicity}.  An algebraic power series is D-finite, so
its coefficients are P-recursive \cite{Stanley1980,FlajoletSedgewick2009}.
The defining equation and the chosen germs use finitely many algebraic
coefficients.  Adjoining these coefficients and the required roots of unity
produces the asserted number field.
\end{proof}

\begin{remark}
At \(a=0\), the limit of \eqref{eq:general-sampling} usually contains
logarithms of algebraic functions.  Algebraicity is therefore asserted only
for nonintegral rational weights.
\end{remark}

We now return to \eqref{eq:R}.  Define
\begin{align}
 D(z)&=b^2(1-z)^2-4(1-cz)(c-z),\label{eq:D}\\
 r_\pm(z)&=\frac{-b(1-z)\pm\sqrt{D(z)}}{2(1-cz)}.
 \label{eq:r-pm}
\end{align}

\begin{theorem}\label{thm:quadratic-sampling}
Let \(c>1\), \(b>-2\sqrt c\), and \(-1<\operatorname{Re}a<1\).  Near
\(z=0\),
\begin{align}
 \sum_{k\geq1}G_{c,b}(a,k)z^k
 &=\frac{2(c-1)z}{1-z}\int_0^\infty
 \frac{t^a\dd t}{(1-cz)t^2+b(1-z)t+c-z}\label{eq:quadratic-integral}\\
 &=-\frac{2\pi(c-1)z}{(1-z)\sin(\pi a)\sqrt{D(z)}}
 \left\{(-r_+(z))^a-(-r_-(z))^a\right\}.
 \label{eq:quadratic-sampling}
\end{align}
The square root and the powers are continued from \(z=0\) inside a simply
connected neighborhood on which no root meets the positive integration
path.  At \(a=0\), the explicit removable value is
\begin{equation}\label{eq:a-zero-sampling}
 -\frac{2(c-1)z}{(1-z)\sqrt{D(z)}}
 \log\frac{-r_+(z)}{-r_-(z)},
\end{equation}
with the logarithm induced by the same continuation.

If \(b^2=4c\), admissibility forces \(b=2\sqrt c\).  The roots in
\eqref{eq:r-pm} are then Puiseux germs, but their symmetric divided
difference in \eqref{eq:quadratic-sampling} is a holomorphic germ.  Formula
\eqref{eq:quadratic-sampling} is understood in this confluent sense.
\end{theorem}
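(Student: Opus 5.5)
The plan is to specialize Theorem~\ref{thm:general-sampling} to \(P(t)=ct^2+bt+1\) and \(Q(t)=t^2+bt+c\), so \(d=2\).

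\textbf{Hypotheses and the integral formula.} Under \eqref{eq:R} both quadratics have positive constant and leading coefficients and no zeros on \((0,\infty)\). Also \(P(1)=Q(1)=c+b+1>0\), since \(b>-2\sqrt c\) gives \(c+b+1>(\sqrt c-1)^2\ge0\). Moreover \(P-Q=(c-1)(t^2-1)\), so \(U\equiv c-1\). Hence \eqref{eq:general-sampling-integral} reads
\[
\frac{2(c-1)z}{1-z}\int_0^\infty\frac{t^a\,\mathrm dt}{Q(t)-zP(t)},
\]
and expanding \(Q-zP=(1-cz)t^2+b(1-z)t+(c-z)\) gives \eqref{eq:quadratic-integral}. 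The quadratic formula gives the roots \(r_\pm\) of \eqref{eq:r-pm}, with discriminant exactly \(D(z)\). Near \(z=0\) we have \(1-cz\ne0\), so the polynomial stays quadratic.

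\textbf{Explicit evaluation.} When \(D(0)=b^2-4c\ne0\), the roots are simple near \(0\). Write
\[
\frac1{(1-cz)(t-r_+)(t-r_-)}=\frac{1}{(1-cz)(r_+-r_-)}\Bigl(\frac1{t-r_+}-\frac1{t-r_-}\Bigr),
\]
and note \((1-cz)(r_+-r_-)=\sqrt{D(z)}\). Then apply \eqref{eq:confluent-mellin} with \(j=1\), which gives \(-\pi(-\rho)^a/\sin(\pi a)\) for each root. This yields \eqref{eq:quadratic-sampling} for \(-1<\operatorname{Re}a<0\), and it extends to the whole strip by continuation in \(a\), exactly as in Theorem~\ref{thm:general-sampling}.

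The branches are those of Theorem~\ref{thm:general-sampling}. At \(z=0\), the roots of \(t^2+bt+c\) avoid \([0,\infty)\): if \(b\ge0\) they are negative or nonreal, and if \(b<0\) they are nonreal because \(b^2<4c\). So \(\log(-r_\pm)\) has imaginary part in \((-\pi,\pi)\), and it is continued in a small simply connected disc where the roots stay off the positive axis. Continuing \(\sqrt D\) together with the labeling \(r_\pm\) keeps the expression consistent, because swapping labels changes the sign of both \(\sqrt D\) and the bracket.

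The value at \(a=0\) follows by letting \(a\to0\). We have \((-r_+)^a-(-r_-)^a=a\log\bigl((-r_+)/(-r_-)\bigr)+O(a^2)\) and \(\sin(\pi a)\sim\pi a\). This gives \eqref{eq:a-zero-sampling}, with the logarithm taken as the difference of the two continued germs.

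\textbf{Confluent case.} If \(b^2=4c\), admissibility \(b>-2\sqrt c\) excludes \(b=-2\sqrt c\), so \(b=2\sqrt c\). Then \(D(0)=0\) and the roots form Puiseux germs in \(z^{1/2}\). Write the bracket divided by \(\sqrt D\) as the divided difference of \(w\mapsto(-w)^a\) at \(r_+,r_-\), times \(1/(1-cz)\). This divided difference equals
\[
\int_0^1 f'\bigl(r_-+\theta(r_+-r_-)\bigr)\,\mathrm d\theta ,\qquad f(w)=(-w)^a,
\]
which is symmetric in \(r_\pm\) and therefore a holomorphic function of the elementary symmetric functions \(r_++r_-\) and \(r_+r_-\), both rational in \(z\). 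Since \(-r_\pm\) stays near \(\sqrt c>0\), the function \(f\) is holomorphic there, so the germ is holomorphic. It agrees with the integral \eqref{eq:quadratic-integral}, which is manifestly holomorphic in \(z\), by continuity from nearby \(b\); alternatively one can invoke the confluent \(j=2\) term of Theorem~\ref{thm:general-sampling}.

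The main obstacle is the careful statement of the branch bookkeeping and the confluent limit; the rest is direct substitution.
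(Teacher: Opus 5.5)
Your proposal is correct and follows the paper's own route: specialize Theorem~\ref{thm:general-sampling} with $U\equiv c-1$, factor $Q-zP=(1-cz)(t-r_+)(t-r_-)$ using $(1-cz)(r_+-r_-)=\sqrt{D(z)}$, let $a\to0$, and treat $b=2\sqrt c$ through the symmetric divided difference. You justify that divided difference more carefully than the paper, and your continuity-in-$b$ step is in fact unnecessary: when $b=2\sqrt c$ one has $D(z)=4(c-1)^2z$, so the roots are simple for every small $z\neq0$ and the simple-root computation applies there directly.
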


\begin{proof}
Here
\[
 \frac{P(t)-Q(t)}{t^2-1}=c-1.
\]
Thus \eqref{eq:quadratic-integral} is Theorem~\ref{thm:general-sampling}.
Factoring its denominator as
\((1-cz)(t-r_+)(t-r_-)\) and using
\((1-cz)(r_+-r_-)=\sqrt D\) gives
\eqref{eq:quadratic-sampling}.  Taking the limit \(a\to0\) gives
\eqref{eq:a-zero-sampling}.  At a repeated root, the quotient
\[
 \frac{(-r_+)^a-(-r_-)^a}{r_+-r_-}
 \longrightarrow-a(-r)^{a-1}
\]
is symmetric and holomorphic.  This proves all branch and confluent
statements.
\end{proof}

\begin{corollary}\label{cor:quadratic-algebraicity}
Let \(c>1\) and \(b>-2\sqrt c\) be real algebraic numbers, and let
\(a\in\mathbb Q\cap(-1,1)\setminus\{0\}\).  Then
\[
 G_{c,b}(a,k)\in\pi\Qbar\quad(k\in\mathbb Z),
\]
and the positive samples satisfy a P-recursive recurrence over a finite number
field.
\end{corollary}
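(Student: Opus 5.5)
The plan is to treat the three ranges \(k=0\), \(k\geq1\) and \(k\leq-1\) separately, reducing everything to Theorem~\ref{thm:quadratic-sampling} and Proposition~\ref{prop:analytic}.

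For \(k=0\) the integrand of \eqref{eq:G} vanishes identically, so \(G_{c,b}(a,0)=0\in\pi\Qbar\).

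For \(k\leq-1\), the reciprocity \eqref{eq:G-reciprocity} gives \(G_{c,b}(a,k)=-G_{c,b}(-a,-k)\). Since \(-a\) again lies in \(\mathbb Q\cap(-1,1)\setminus\{0\}\), this case reduces to the positive case with \(a\) replaced by \(-a\).

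For \(k\geq1\), the main point is that the Taylor coefficients at \(z=0\) of \((1/\pi)\) times \eqref{eq:quadratic-sampling} lie in \(\Qbar\). Two facts make this work.
\begin{enumerate}
 \item The prefactor \(2(c-1)z/\{(1-z)\sin(\pi a)\}\) has algebraic Taylor coefficients, because \(\sin(\pi a)\in\Qbar\) for rational \(a\).
 \item In the generic case \(b^2\neq4c\), the numbers \(D(0)=b^2-4c\) and \(r_\pm(0)\) (the roots of \(t^2+bt+c\)) are algebraic and nonzero. By admissibility, \(r_\pm(0)\) are also not positive reals. Hence \(\sqrt{D(z)}\) and \(r_\pm(z)\) are holomorphic germs whose derivatives at \(0\) are polynomial expressions, over \(\mathbb Q(b,c,\sqrt{b^2-4c})\), in the derivatives of \(D\) and of \(1-cz\).
\end{enumerate}
Next, differentiate \((-r_\pm(z))^a=\exp\{a\log(-r_\pm(z))\}\) \(m\) times. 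By Faà di Bruno, the \(m\)-th derivative at \(0\) is \((-r_\pm(0))^{a}\) times a polynomial in \(a\), \((-r_\pm(0))^{-1}\), and the derivatives of \(r_\pm\) at \(0\). The germ value \((-r_\pm(0))^a\) is an algebraic number, since it is a branch of a rational power of an algebraic number. All coefficients therefore lie in the finite extension generated by \(b\), \(c\), \(\sqrt{b^2-4c}\), \((-r_\pm(0))^a\), and \(\sin(\pi a)\). Multiplying the three series and dividing by the prefactors gives \(G_{c,b}(a,k)\in\pi\Qbar\), all in one number field.

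The P-recursive statement is the last assertion of Theorem~\ref{thm:general-sampling}, whose hypotheses hold here with \(P=ct^2+bt+1\) and \(Q=t^2+bt+c\).

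I expect the main obstacle to be the confluent case \(b^2=4c\), equivalently \(b=2\sqrt c\). There \(\sqrt{D}\) vanishes at \(z=0\) and the individual roots are only Puiseux germs. To avoid working with branches, I would first try the integral form \eqref{eq:quadratic-integral} directly. Expanding its denominator in powers of \(z\) around \((t+\sqrt c)^2\) yields finitely many Mellin integrals of the type \eqref{eq:confluent-mellin} at the single algebraic point \(\rho=-\sqrt c\). Each such integral is \(\pi\) times an algebraic number, for the same reasons as above. The coefficient of \(z^k\) is a finite combination of them with algebraic coefficients.

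An alternative route for this case uses the symmetric divided difference in Theorem~\ref{thm:quadratic-sampling}. Written as a symmetric function, it becomes a power series in \(D(z)\) whose coefficients are built from \(r(0)=-\sqrt c\) and \((\sqrt c)^{a-1-j}\), and these are again algebraic.
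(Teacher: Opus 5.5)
Your proof is correct and follows the paper's argument. The zero sample vanishes, reciprocity \eqref{eq:G-reciprocity} reduces \(k\le -1\) to \(k\ge 1\) with \(-a\) in place of \(a\), and both the positive samples and P-recursiveness come from Theorem~\ref{thm:general-sampling}. The only difference is that you re-derive the algebraicity of the Taylor coefficients by hand: Fa\`a di Bruno on the radical formula, plus a separate geometric-expansion argument when \(b=2\sqrt c\). The paper simply invokes the final clause of Theorem~\ref{thm:general-sampling}, which already places all coefficients in one number field and covers the confluent case through confluent partial fractions.
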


\begin{proof}
The positive samples follow from Theorem~\ref{thm:general-sampling}.
Reciprocity \eqref{eq:G-reciprocity} gives
\(G_{c,b}(a,-k)=-G_{c,b}(-a,k)\), and the sample at zero is zero.
\end{proof}

On the locus \(b=-2\), one simple sample will be useful later.  Put
\begin{equation}\label{eq:theta}
 \theta_c=\arccos(-c^{-1/2})\in(\pi/2,\pi).
\end{equation}

\begin{corollary}\label{cor:first-sample}
For \(c>1\) and \(-1<\operatorname{Re}a<1\), with the removable value at
\(a=0\),
\begin{equation}\label{eq:first-sample}
 G_{c,-2}(a,1)=2\pi\sqrt{c-1}\,c^{a/2}
 \frac{\sin(a\theta_c)}{\sin(\pi a)}.
\end{equation}
In particular, if \(\varphi=(1+\sqrt5)/2\), then
\begin{equation}\label{eq:golden-sample}
 G_{5,-2}(1/2,1)=4\pi\sqrt\varphi.
\end{equation}
\end{corollary}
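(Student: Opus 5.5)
The plan is to compute the single sample \(k=1\) directly, rather than extract the coefficient of \(z\) from \eqref{eq:quadratic-sampling}; this avoids any question of continuing branches in \(z\). For \(k=1\) and \(b=-2\) we have \(R_{c,-2}(t)-1=(c-1)(t^2-1)/(t^2-2t+c)\). Hence
\[
 G_{c,-2}(a,1)=2(c-1)\int_0^\infty\frac{t^a\dd t}{t^2-2t+c},
\]
which is absolutely convergent for \(-1<\operatorname{Re}a<1\). I will first work in \(-1<\operatorname{Re}a<0\), where Euler's integral \eqref{eq:confluent-mellin} with \(j=1\) applies termwise, and then extend by analytic continuation in \(a\) to the whole strip.

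The roots of \(t^2-2t+c\) are \(r_\pm=1\pm i\sqrt{c-1}=\sqrt c\,e^{\pm i\phi}\), where \(\phi=\arccos(c^{-1/2})\in(0,\pi/2)\). They are nonreal, so they never meet \([0,\infty)\). Then \(-r_\pm=\sqrt c\,e^{\mp i(\pi-\phi)}\), and \(\pi-\phi=\theta_c\) by \eqref{eq:theta}, since \(\cos(\pi-\phi)=-c^{-1/2}\). Because \(\theta_c\in(0,\pi)\), these arguments are exactly the principal ones, with imaginary part of the logarithm in \((-\pi,\pi)\). This matches the branch prescription of Euler's beta integral, and so
\[
 (-r_+)^a-(-r_-)^a=c^{a/2}\bigl(e^{-ia\theta_c}-e^{ia\theta_c}\bigr)=-2i\,c^{a/2}\sin(a\theta_c).
\]

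Next I use the partial-fraction decomposition \(1/((t-r_+)(t-r_-))=\{(t-r_+)^{-1}-(t-r_-)^{-1}\}/(r_+-r_-)\) together with \eqref{eq:confluent-mellin} for \(j=1\). This gives
\[
 \int_0^\infty\frac{t^a\dd t}{(t-r_+)(t-r_-)}=-\frac{\pi}{\sin(\pi a)}\,\frac{(-r_+)^a-(-r_-)^a}{r_+-r_-},
\]
with \(r_+-r_-=2i\sqrt{c-1}\). Substituting and multiplying by \(2(c-1)\) yields \eqref{eq:first-sample}.

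Both sides are holomorphic in \(a\) on the strip. The right side has a removable singularity at \(a=0\), with value \(2\sqrt{c-1}\,\theta_c\). So the identity extends from \(-1<\operatorname{Re}a<0\) to the whole strip, consistent with Proposition~\ref{prop:analytic}. The only delicate point is the branch bookkeeping: I must check that the principal arguments of \(-r_\pm\) are \(\mp\theta_c\) rather than \(\pm(\pi+\phi)\). This holds because the real parts of \(-r_\pm\) are negative and their imaginary parts are \(\mp\sqrt{c-1}\).

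For \eqref{eq:golden-sample}, I take \(c=5\) and \(a=1/2\). Then \(\sin(\pi a)=1\) and \(c^{a/2}=5^{1/4}\), while
\[
 \sin(\theta_5/2)=\sqrt{(1-\cos\theta_5)/2}=\sqrt{(1+5^{-1/2})/2}.
\]
Hence
\[
 G_{5,-2}(1/2,1)=4\pi\,5^{1/4}\sqrt{(1+5^{-1/2})/2}=4\pi\sqrt{(\sqrt5+1)/2}=4\pi\sqrt\varphi.
\]
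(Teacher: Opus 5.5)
Your proposal is correct and follows the paper's own proof. Like the paper, you reduce to \(2(c-1)\int_0^\infty t^a(t^2-2t+c)^{-1}\dd t\), split by partial fractions over the roots \(1\pm i\sqrt{c-1}\), and apply \eqref{eq:confluent-mellin} with \(j=1\) on \(-1<\operatorname{Re}a<0\) before continuing analytically in \(a\); your checks that the principal arguments of \(-r_\pm\) are \(\mp\theta_c\), and your half-angle evaluation at \(c=5\), fill in details the paper leaves implicit.
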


\begin{proof}
Since \(R_{c,-2}(t)-1=(c-1)(t^2-1)/(t^2-2t+c)\),
\[
 G_{c,-2}(a,1)=2(c-1)\int_0^\infty
 \frac{t^a}{t^2-2t+c}\dd t.
\]
The roots are \(1\pm i\sqrt{c-1}\).  Partial fractions and
\eqref{eq:confluent-mellin} give \eqref{eq:first-sample}; setting
\((c,a)=(5,1/2)\) gives \eqref{eq:golden-sample}.
\end{proof}

\section{The principal half-weight sample}\label{sec:effective}

Set
\begin{equation}\label{eq:F-definition}
 U_k=G_{5,-2}(1/2,k),\quad
 F(z)=\frac1\pi\sum_{k\geq1}U_kz^k,\quad
 \Delta(z)=1-6z+z^2.
\end{equation}
Theorem~\ref{thm:quadratic-sampling} gives the distinguished branch
\begin{equation}\label{eq:F-radical}
 F(z)=\frac{4z}{(1-z)\sqrt{\Delta(z)}}
 \sqrt{\frac12\left\{
 \sqrt{\frac{5-z}{1-5z}}+\frac{1-z}{1-5z}\right\}},
\end{equation}
where all radicals are continued from their positive values at zero.  In
particular, \(F'(0)=4\sqrt\varphi\).

\begin{theorem}\label{thm:effective-algebraicity}
The minimal polynomial of \(F\) over \(\mathbb Q(z)\) is
\begin{align}
 &(1-z)^4(1-5z)^2\Delta(z)F(z)^4
 -16z^2(1-z)^3(1-5z)F(z)^2-256z^4=0.
 \label{eq:F-minimal}
\end{align}
It is irreducible and has degree four.  All four branches meeting the origin
are analytic there and have
\begin{equation}\label{eq:F-branches}
 F(z)=\alpha z+O(z^2),\quad
 \alpha\in\left\{4\sqrt\varphi,-4\sqrt\varphi,
 \frac{4i}{\sqrt\varphi},-\frac{4i}{\sqrt\varphi}\right\}.
\end{equation}
The finite branch locus of the algebraic cover is
\begin{equation}\label{eq:branch-locus}
 \left\{\frac15,\ 5,\ 3-2\sqrt2,\ 3+2\sqrt2\right\}.
\end{equation}
The principal Taylor branch has radius
\begin{equation}\label{eq:rho}
 \rho=3-2\sqrt2=(3+2\sqrt2)^{-1},
\end{equation}
and \(\rho\) is its unique singularity on \(|z|=\rho\).
\end{theorem}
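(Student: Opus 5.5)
The plan is to eliminate the radicals in \eqref{eq:F-radical} directly, and then read irreducibility, the branches at the origin, and the branch locus off the resulting quartic. Put
\[
 W=\frac{(1-z)\sqrt{\Delta(z)}}{4z}\,F,\qquad S=\sqrt{\frac{5-z}{1-5z}},
\]
so that \(2W^2-\frac{1-z}{1-5z}=S\). Squaring once removes \(S\). The key identity is
\[
 (1-z)^2-(5-z)(1-5z)=-4\Delta(z).
\]
With it, the squared relation becomes
\[
 W^4-\frac{1-z}{1-5z}W^2-\frac{\Delta}{(1-5z)^2}=0 .
\]
Substituting \(W^2=(1-z)^2\Delta F^2/(16z^2)\) and multiplying by \(256z^4(1-5z)^2/\Delta\) gives exactly \eqref{eq:F-minimal}. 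This shows that \(F\) is a root of that quartic.

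\textbf{Branches at the origin.} Near \(z=0\) the quartic reads \(\Delta(1-z)^4(1-5z)^2Y^2-16z^2(1-z)^3(1-5z)Y-256z^4=0\) in \(Y=F^2\). Writing \(Y=z^2\eta\) and setting \(z=0\) gives \(\eta^2-16\eta-256=0\), so \(\eta=8\pm8\sqrt5\). These are \(16\varphi\) and \(-16/\varphi\). Both roots are simple, so the implicit function theorem gives four analytic branches \(F=\alpha z+O(z^2)\) with \(\alpha^2\in\{16\varphi,-16/\varphi\}\). This is \eqref{eq:F-branches}, and the positive choice \(\alpha=4\sqrt\varphi\) is the principal branch.

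\textbf{Irreducibility.} View the quartic as a quadratic in \(Y\). Its discriminant is
\[
 256z^4(1-z)^4(1-5z)^2\{(1-z)^2+4\Delta\}=256z^4(1-z)^4(1-5z)^3(5-z),
\]
which is not a square in \(\mathbb Q(z)\). Hence \(Y\) has degree \(2\). To exclude a factorization \((F^2-uF+v)(F^2+uF+v')\), I would use monodromy rather than coefficient comparison:
\begin{itemize}
 \item a small loop around \(z=5\) (or \(1/5\)) changes the sign of \(S\) and so interchanges the two \(Y\)-roots;
 \item a loop around a root of \(\Delta\) fixes \(S\) and the inner bracket but reverses \(\sqrt\Delta\), so it sends \(F\mapsto-F\).
\end{itemize}
These moves generate a transitive group on the four branches, so the polynomial is irreducible. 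Together with the first step, it is therefore the minimal polynomial of \(F\).

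\textbf{Branch locus and radius.} The candidate finite ramification points are the zeros of the leading coefficient and of the discriminant of the quartic: \(0,1,1/5,5\) and \(3\pm2\sqrt2\). The origin is excluded by the four analytic germs found above. The point \(z=1\) needs care. There \(\Delta(1)=-4\ne0\) and \(S(1)=\pm i\), and the radical formula shows that the branches have simple poles coming from \((1-z)^{-1}\) but no ramification. I expect this point, together with confirming genuine ramification at each of the four listed points, to be the main obstacle. The monodromy analysis of the previous step already shows nontrivial local monodromy at \(1/5\), \(5\) and \(3\pm2\sqrt2\), which gives \eqref{eq:branch-locus}.

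For the radius, \(\rho=3-2\sqrt2\approx0.1716\) has strictly smaller modulus than \(1/5\), \(5\), \(1\) and \(3+2\sqrt2\). Hence the principal branch is analytic on \(|z|<\rho\). At \(z=\rho\) the factor \(1/\sqrt\Delta\) blows up while the inner bracket is finite and nonzero: it is positive, being continued along \([0,\rho]\) from positive values with \(1-5z>0\). So \(\rho\) is a genuine singularity, and it is the only one on \(|z|=\rho\) because no other point of the locus or of \(\{1\}\) has that modulus.
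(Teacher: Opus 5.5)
Your proof is correct. The elimination, the implicit-function argument at the origin and the radius argument are the paper's; your \(W\) is the paper's \(H\) rescaled by \((1-z)\sqrt{\Delta}/4\). You differ from the paper in how you prove irreducibility and the branch locus.

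\textbf{Irreducibility.} The paper argues field-theoretically. The identity \(S=(1-z)^2\Delta H^2/8-B\) puts \(\mathbb Q(z)(S)\) inside \(\mathbb Q(z)(H)\). Then \(H^2=v\) is a genuine quadratic extension of \(\mathbb Q(z)(S)\), because the norm of \(v\) has odd valuation at the zeros of \(\Delta\). You instead use the non-square discriminant \(256z^4(1-z)^4(1-5z)^3(5-z)\) of the quadratic in \(F^2\), which plays the role of the paper's first quadratic step, and then transitivity of monodromy on the four branches.

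\textbf{Branch locus.} The paper uses a valuation table on the \(S\)-curve. You use the candidate set coming from the leading coefficient and the discriminant, together with local monodromy. At \(z=1\) it helps to say explicitly that \(B(1)=0\), so the bracket equals \(\pm i/2\neq0\).

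\textbf{What each route buys.} Yours gives absolute irreducibility over \(\mathbb C(z)\) at once and fits naturally with the singularity analysis used for the radius. The paper's is purely algebraic, and its table records the ramification index at every point, including \(0\), \(1\) and \(\infty\).

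\textbf{One claim needs sharpening.} A loop around a zero of \(\Delta\) does not negate every branch. It sends \(F\mapsto-F\) only on the \(S\)-sheet where \(S=B\) at that point. On the sheet where \(S=-B\), the factor \(S+B\) has a simple zero. This cancels the ramification of \(\sqrt{\Delta}\), so the inner radical also changes sign and those two branches are fixed. These are exactly the two table rows the paper lists at \(\rho\) and \(\rho^{-1}\).

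Your conclusions survive this correction:
\begin{itemize}
 \item The principal sheet has \(S=B\) at \(\rho\), as you note when you observe that the bracket is positive there.
 \item Each of \(3\pm2\sqrt2\) has a point with \(S=B\), because \(S\) is unramified there and takes both values \(\pm B\neq0\). So both points are genuinely branched.
 \item The single transposition \(F\leftrightarrow-F\) on the principal pair, combined with the swap of the pairs \(\{\pm F_+\}\leftrightarrow\{\pm F_-\}\) produced by a loop around \(5\), already acts transitively on the four branches.
\end{itemize}
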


\begin{proof}
Put \(H(z)=F(z)/z\).  Squaring \eqref{eq:F-radical} twice gives
\begin{equation}\label{eq:H-minimal}
 (1-z)^4(1-5z)^2\Delta H^4
 -16(1-z)^3(1-5z)H^2-256=0,
\end{equation}
which is equivalent to \eqref{eq:F-minimal}.

For irreducibility, work over \(K_0=\mathbb Q(z)\), put
\[
 S^2=\frac{5-z}{1-5z},\quad B=\frac{1-z}{1-5z},\quad
 v=\frac{8(S+B)}{(1-z)^2\Delta}.
\]
Then \(H^2=v\), and \(S\) is recovered from \(H\) by
\begin{equation}\label{eq:S-from-H}
 S=\frac{(1-z)^2\Delta(z)H(z)^2}{8}-B.
\end{equation}
The extension \(K_0(S)/K_0\) is genuine because
\((5-z)/(1-5z)\) has a simple zero and a simple pole.  Its norm is
\begin{equation}\label{eq:v-norm}
 N_{K_0(S)/K_0}(v)
 =-\frac{256}{(1-z)^4\Delta(z)(1-5z)^2}.
\end{equation}
This is not a square in \(K_0\): it has odd valuation at both zeros of
\(\Delta\).  Hence \(v\) is not a square in \(K_0(S)\).
Equation \eqref{eq:S-from-H} gives \(K_0(S)\subset K_0(H)\); adjoining
\(H\) to the quadratic field \(K_0(S)\) is therefore a second quadratic
extension.  Thus \([K_0(H):K_0]=4\), and
\eqref{eq:H-minimal}, and therefore
\eqref{eq:F-minimal}, is irreducible.  At zero, the equation for \(H(0)\)
is \(H^4-16H^2-256=0\).  Its four simple roots give
\eqref{eq:F-branches} by the implicit-function theorem.

The quadratic extension generated by \(S\) ramifies exactly at \(1/5\) and
\(5\).  The remaining quadratic extension is \(H^2=v\).  The complete
valuation check is shown below; \(e\) is the total ramification index over
the \(z\)-line, and \(\rho=3-2\sqrt2\).
\begin{center}
\small
\begin{tabular}{ccccc}
\toprule
\(z\)&point on the \(S\)-cover&\(\operatorname{ord}S\)&
\(\operatorname{ord}v\)&\(e\)\\
\midrule
\(1/5\)&\(S=\infty\)&\(-1\)&\(-2\)&2\\
\(5\)&\(S=0\)&1&0&2\\
\(\rho,\rho^{-1}\)&\(S=B\)&0&\(-1\)&2\\
\(\rho,\rho^{-1}\)&\(S=-B\)&0&0&1\\
\(1\)&\(S=\pm i\)&0&\(-2\)&1\\
\(0\)&\(S=\pm\sqrt5\)&0&0&1\\
\(\infty\)&\(S=\pm1/\sqrt5\)&0&4&1\\
\bottomrule
\end{tabular}
\end{center}
At a zero of \(\Delta\), the sheet \(S=B\) has odd valuation, while on
the sheet \(S=-B\) the zero of \(S+B\) cancels the pole of
\(\Delta^{-1}\).  All other valuations relevant to the square-root
extension are even.  This proves \eqref{eq:branch-locus}.  In particular,
\(z=1\) is a pole of \(H\), of order one on each sheet, and not a branch
value.

The first point in \eqref{eq:branch-locus} met by the principal branch is
\(\rho\).  The radical formula is analytic for \(|z|<\rho\), while its
factor \(\Delta^{-1/2}\) is singular at \(\rho\).  All other listed points
have larger modulus.  Hence \(\rho\) is the unique dominant singularity and
the radius is exact.
\end{proof}

\begin{theorem}\label{thm:effective-ode}
Let
\begin{align}
 D_0(z)&=(z-5)(z-1)(z+1)(5z-1)\Delta(z),\label{eq:D0}\\
 P_0(z)&=15z^5-131z^4+230z^3+290z^2-325z+49,\label{eq:P0}\\
 Q_0(z)&=15z^4-94z^3+32z^2+350z-127.\label{eq:Q0}
\end{align}
Then \(H=F/z\) satisfies the minimal-order homogeneous linear differential
equation over \(\mathbb Q(z)\)
\begin{equation}\label{eq:H-ode}
 D_0H''+2P_0H'+2Q_0H=0.
\end{equation}
Equivalently,
\begin{equation}\label{eq:F-ode}
 z^2D_0F''+2z(zP_0-D_0)F'
 +2(z^2Q_0-zP_0+D_0)F=0.
\end{equation}
The point \(z=-1\) in this polynomial form is apparent.

With \(U_j=0\) for \(j\leq0\), the samples satisfy, for every \(k\geq3\),
\begin{align}
 5(k-2)(k-1)U_k={}&14(k-2)(4k-5)U_{k-1}\notag\\
 &+(-161k^2+477k-236)U_{k-2}\notag\\
 &+20(29k-81)U_{k-3}\notag\\
 &+(161k^2-1311k+2594)U_{k-4}\notag\\
 &-2(k-4)(28k-121)U_{k-5}\notag\\
 &+5(k-5)(k-4)U_{k-6},
 \label{eq:U-recurrence}
\end{align}
with
\begin{equation}\label{eq:U-initial}
 U_1=4\pi\sqrt\varphi,\quad
 U_2=\frac{4\pi}{5}(25+\sqrt5)\sqrt\varphi.
\end{equation}
\end{theorem}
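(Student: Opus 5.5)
The plan is to derive the differential equation by direct logarithmic differentiation of the radical formula \eqref{eq:F-radical}, certify it by the quartic \eqref{eq:H-minimal}, and then read off the recurrence. Write \(H=F/z=4(1-z)^{-1}\Delta^{-1/2}W\) with \(W^2=\tfrac12(S+B)\), where \(S^2=(5-z)/(1-5z)\) and \(B=(1-z)/(1-5z)\) as in the proof of Theorem~\ref{thm:effective-algebraicity}. Each factor has a rational logarithmic derivative over \(K_0(S)=\mathbb Q(z,S)\):
\[
\frac{S'}{S}=\frac12\Bigl(\frac{-1}{5-z}+\frac{5}{1-5z}\Bigr),
\]
and \(W'/W=(S'+B')/(2(S+B))\). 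Hence \(H'=\lambda H\) with \(\lambda\in K_0(S)\), say \(\lambda=\lambda_0+\lambda_1S\) with \(\lambda_0,\lambda_1\in\mathbb Q(z)\). Differentiating once more gives \(H''=(\lambda'+\lambda^2)H\). The three functions \(H,H',H''\) then lie in the two-dimensional \(\mathbb Q(z)\)-space \(H\cdot\langle 1,S\rangle\). So there is a unique (up to scaling) relation \(\alpha H''+\beta H'+\gamma H=0\) with \(\alpha,\beta,\gamma\in\mathbb Q(z)\). It is obtained by requiring the \(1\)- and \(S\)-components to vanish, which is a \(2\times 3\) linear system whose solution is given by \(2\times2\) minors.

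Clearing denominators should reproduce \(D_0,2P_0,2Q_0\). The routine but heavy part is checking that the minors, after removing the common factor, equal \eqref{eq:D0}--\eqref{eq:Q0}. I would verify this symbolically, or more cheaply by substituting the truncated series of the principal branch and checking enough coefficients to exceed the degree bound. The factor \(z+1\) in \(D_0\) appears only from the elimination. To show it is apparent, I would check that the local exponents at \(z=-1\) are \(0\) and \(2\) (or similar nonnegative integers) with no logarithm. This follows because \(H\) itself is analytic at \(-1\): \(-1\) is not in \eqref{eq:branch-locus} and is not a pole of \(H\).

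Minimality comes next. Order one is impossible: a first-order equation \(H'=\mu H\) with \(\mu\in\mathbb Q(z)\) would force \(\lambda\in\mathbb Q(z)\), i.e.\ \(\lambda_1=0\). A single Laurent check shows \(\lambda_1\neq0\); for instance, \(\lambda\) has different values on the two \(S\)-sheets at some point. An order-one equation would also make all conjugates \(H_i\) satisfy the same equation, so \(H_i/H\) would be constant. This is false because the branches \eqref{eq:F-branches} have ratios like \(i/\varphi\) yet different local behaviour at \(\rho\) (ramified on one sheet, unramified on the other). Since \(H\) is not annihilated by any nonzero operator of order at most one over \(\mathbb Q(z)\), the order-two operator is minimal. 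The equivalent form \eqref{eq:F-ode} follows by substituting \(H=F/z\), \(H'=F'/z-F/z^2\), \(H''=F''/z-2F'/z^2+2F/z^3\), and multiplying by \(z^3\).

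For the recurrence, insert \(F=\pi^{-1}\sum U_kz^k\) into \eqref{eq:F-ode}. The coefficients are polynomials of degree at most \(8\) in \(z\), since \(\deg D_0=6\) and \(z^2D_0\) has degree \(8\). Extracting \([z^k]\) gives a recurrence linking \(U_k,\dots,U_{k-6}\) after shifting; the span of seven terms matches the lowest-order and top-degree parts. I would compute the coefficient of each shift \(U_{k-j}\) as the quadratic polynomial \(\sum_i c_{ij}(k-j)(k-j-1)+\dots\), and confirm that it matches \eqref{eq:U-recurrence} up to an overall scalar. The leading coefficient \(D_0(0)=5\) gives the factor \(5(k-2)(k-1)\). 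Because \(F\) has a simple zero at \(0\), the low-index instances are either trivial or determine \(U_2\) from \(U_1\). Indeed, the index shift in the recurrence arises because the equation is really for \(H\), whose coefficients are \(U_{k+1}\). For the initial values, \(U_1\) is \eqref{eq:golden-sample}. \(U_2\) comes from the \(z^2\) coefficient of \eqref{eq:F-radical}: expand \(\Delta^{-1/2}=1+3z+\dots\), \((1-z)^{-1}=1+z+\dots\), \(S=\sqrt5(1+\tfrac{12}{5}z+\dots)\), and \(B=1+4z+\dots\), then simplify with \(\sqrt5+1=2\varphi\).

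The main obstacle is the exact elimination producing \(P_0,Q_0\), together with the recurrence coefficients. This is pure computation, and I would delegate it to computer algebra while documenting the checks.
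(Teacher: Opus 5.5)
Your proposal is correct and follows essentially the paper's route. Your $\mathbb Q(z)$-relation among $H,H',H''$ inside $H\cdot\langle 1,S\rangle$ is the paper's Wronskian elimination over the conjugates $H_\pm$, equivalently the certificate \eqref{eq:ODE-residual}, and your minimality argument via $\lambda_1\neq0$ is the paper's argument via \eqref{eq:logdifference}. There are two harmless slips: $D_0(0)=-5$, not $5$; and at $k=1,2$ the recurrence is vacuous because its leading coefficient vanishes ($z=0$ is an ordinary point of \eqref{eq:H-ode}), so $U_2$ is genuine initial data rather than being determined by $U_1$ --- which your direct series computation of $U_2$ already handles correctly.
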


\begin{proof}
Let \(H_+\) be the branch in \eqref{eq:F-radical} after division by \(z\),
and let \(H_-\) be obtained by changing the sign of
\(S=\sqrt{(5-z)/(1-5z)}\).  Direct logarithmic differentiation gives
\begin{align}
 \frac{H_+'}{H_+}+\frac{H_-'}{H_-}
 &=\frac2{1-z}+\frac{3-z}{\Delta}+\frac5{1-5z},\label{eq:logsum}\\
 \frac{H_+'}{H_+}-\frac{H_-'}{H_-}
 &=\frac{2(z+1)}{\Delta\sqrt{(1-5z)(5-z)}}.
 \label{eq:logdifference}
\end{align}
Put \(u_\pm=H_\pm'/H_\pm\) and
\begin{equation}\label{eq:Wronskian}
 W=H_+H_-'-H_+'H_-.
\end{equation}
The Wronskian elimination identity is
\begin{equation}\label{eq:Wronskian-operator}
 y''-\frac{W'}W y'
 +\frac{H_+'H_-''-H_+''H_-'}W y=0.
\end{equation}
Using \eqref{eq:logsum}--\eqref{eq:logdifference} in
\eqref{eq:Wronskian-operator} gives the two rational identities
\begin{equation}\label{eq:Wronskian-simplification}
 -\frac{W'}W=\frac{2P_0}{D_0},\qquad
 \frac{H_+'H_-''-H_+''H_-'}W=\frac{2Q_0}{D_0}.
\end{equation}
Equivalently, direct substitution leaves the displayed residual
\begin{equation}\label{eq:ODE-residual}
 D_0\{u_\pm'+u_\pm^2\}+2P_0u_\pm+2Q_0=0.
\end{equation}
Equations \eqref{eq:Wronskian-simplification}--\eqref{eq:ODE-residual}
are a checkable elimination certificate for \eqref{eq:H-ode}.

The right-hand side of \eqref{eq:logdifference} is nonzero, so the two
branches are independent.  If an order-one equation over \(\mathbb Q(z)\)
existed, then \(u_+\in\mathbb Q(z)\).  It would be fixed by the involution
\(S\mapsto-S\), which exchanges \(H_+\) and \(H_-\), and hence would equal
\(u_-\).  This contradicts \eqref{eq:logdifference}; the order is minimal.

Replacing \(H\) by \(F/z\) gives \eqref{eq:F-ode}.  Finally insert
\[
 H(z)=\sum_{j\geq0}\frac{U_{j+1}}\pi z^j
\]
into \eqref{eq:H-ode} and equate coefficients.  This gives
\eqref{eq:U-recurrence}; the first two coefficients of
\eqref{eq:F-radical} give \eqref{eq:U-initial}.  As a first independent
coefficient check, \eqref{eq:U-recurrence} at \(k=3\) reads
\begin{equation}\label{eq:first-recurrence-check}
 10U_3=98U_2-254U_1,\qquad
 U_3=\frac{4\pi\sqrt\varphi}{25}(590+49\sqrt5),
\end{equation}
which is exactly the coefficient of \(z^3\) in
\eqref{eq:F-radical}.
\end{proof}

\begin{theorem}\label{thm:sample-asymptotic}
As \(k\to\infty\),
\begin{equation}\label{eq:sample-asymptotic}
 G_{5,-2}(1/2,k)
 =\sqrt{\frac{\pi}{3\sqrt2-4}}\,
 (3+2\sqrt2)^k k^{-1/2}\bigl(1+O(k^{-1})\bigr).
\end{equation}
The reciprocal number \(\rho\) in \eqref{eq:rho} is exactly the reciprocal
of the maximum of \(R_{5,-2}\) on the positive axis.
\end{theorem}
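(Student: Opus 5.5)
The plan is a direct singularity analysis of the radical formula \eqref{eq:F-radical}, using the dominant-singularity information already established in Theorem~\ref{thm:effective-algebraicity}, followed by the standard transfer theorem \cite{FlajoletSedgewick2009}. By that theorem, the principal branch of \(F\) is analytic in \(|z|<\rho\), and \(\rho=3-2\sqrt2\) is its unique singularity on \(|z|=\rho\). Since \(F\) is algebraic, it continues analytically to a \(\Delta\)-domain at \(\rho\), because the remaining branch points \(1/5\), \(5\) and \(\rho^{-1}\) and the pole \(z=1\) lie strictly outside the circle \(|z|=\rho\). So it suffices to identify the local singular expansion of \(F\) at \(\rho\).

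\textbf{Local expansion at \(\rho\).} Write
\[
\Delta(z)=(\rho-z)(\rho^{-1}-z)=\rho(\rho^{-1}-\rho)(1-z/\rho)\bigl(1+O(\rho-z)\bigr).
\]
Here \(\rho^{-1}-\rho=4\sqrt2\). The other factors of \eqref{eq:F-radical} are analytic and nonzero at \(\rho\), in particular \(4z/(1-z)\) and the outer radical. The valuation table in the proof of Theorem~\ref{thm:effective-algebraicity} says the principal branch reaches \(\rho\) on the sheet \(S=B\), where \(S+B\neq0\). I will confirm this directly:
\begin{itemize}
\item \(1-5\rho=2(5\sqrt2-7)\), and \((5\sqrt2-7)(5\sqrt2+7)=1\);
\item hence \(B(\rho)=(\sqrt2-1)(5\sqrt2+7)=3+2\sqrt2=\rho^{-1}\);
\item one checks that \(S(\rho)^2=(5-\rho)/(1-5\rho)=B(\rho)^2\), with the positive continuation giving \(S=B\).
\end{itemize}
The outer radical therefore equals \(\sqrt{B(\rho)}=\rho^{-1/2}\) at \(\rho\). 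This gives
\[
F(z)=C\,(1-z/\rho)^{-1/2}+O\bigl((1-z/\rho)^{1/2}\bigr),\qquad
C=\frac{4\rho\cdot\rho^{-1/2}}{(1-\rho)\sqrt{4\sqrt2\,\rho}},
\]
so \(C^2=4/\bigl(\sqrt2(1-\rho)^2\bigr)\). Since \((1-\rho)^2=12-8\sqrt2\), this is \(C^2=1/(3\sqrt2-4)\).

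\textbf{Transfer.} The transfer theorem gives
\[
[z^k](1-z/\rho)^{-1/2}=\rho^{-k}k^{-1/2}\pi^{-1/2}\bigl(1+O(k^{-1})\bigr),
\]
while the \(O((1-z/\rho)^{1/2})\) remainder contributes \(O(\rho^{-k}k^{-3/2})\). Multiplying by \(\pi\), since \(U_k=\pi[z^k]F\), gives \(U_k\sim\sqrt\pi\,C\,\rho^{-k}k^{-1/2}\). This is \eqref{eq:sample-asymptotic}, with relative error \(O(k^{-1})\).

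\textbf{The extremal statement.} For \(b=-2\) and \(c=5\), the quadratic \(1-zR\) has a double root exactly where \(D(z)=0\). By \eqref{eq:D}, \(D(z)=4(1-z)^2-4(1-5z)(5-z)=-16\Delta(z)\). At \(z=\rho\) the double root is
\[
r=-b(1-z)/\bigl(2(1-cz)\bigr)=(1-\rho)/(1-5\rho)=B(\rho)>0.
\]
So at \(z=\rho\) the curve \(zR(t)=1\) touches the positive axis for the first time as \(z\) increases from \(0\). Equivalently, \(\rho^{-1}\) is the value of \(R_{5,-2}\) at its interior critical point \(t=B(\rho)=3+2\sqrt2\).

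To see that this is the maximum, note that \(R_{5,-2}\) tends to \(1/5\) at \(0\) and to \(5<\rho^{-1}\) at infinity. For \(0<z<\rho\), the polynomial \(1-zR\) has no positive root, so \(R_{5,-2}<\rho^{-1}\) on \((0,\infty)\) except at the touching point. Hence \(\sup R_{5,-2}=\rho^{-1}\), and it is attained at \(t=3+2\sqrt2\).

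\textbf{Main obstacle.} The delicate step is branch bookkeeping. I must verify that the principal branch arrives at \(\rho\) on the sheet \(S=B\), where the singularity is a genuine inverse square root, and not on \(S=-B\), where it cancels. I must also fix the sign of \(C\) as positive. I would settle both by evaluating the radicals along the real segment \([0,\rho)\), where every quantity in \eqref{eq:F-radical} stays positive.
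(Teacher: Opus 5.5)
Your proposal is correct and follows essentially the paper's route: the singular expansion of \eqref{eq:F-radical} at \(\rho\) on the sheet \(S=B\) (your \(C^2=1/(3\sqrt2-4)\) agrees with \eqref{eq:F-singular}), uniqueness of the dominant singularity from Theorem~\ref{thm:effective-algebraicity}, and the transfer theorem. The one difference is the extremal statement, which you obtain from the double root of \(Q-zP\) where the paper simply differentiates \(R_{5,-2}\); there you should state why \(Q-zP\) has no positive root for \(0<z<\rho\), namely that \(D(z)=-16\Delta(z)<0\) and \(1-5z>0\) on that interval so both roots are nonreal, because without this the claim is just a restatement of the bound \(R_{5,-2}\le\rho^{-1}\) you are trying to prove.
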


\begin{proof}
Differentiation gives
\[
 R_{5,-2}'(t)=-\frac{8(t^2-6t+1)}{(t^2-2t+5)^2}.
\]
The unique global maximum is attained at \(t_0=3+2\sqrt2\), and
\(R_{5,-2}(t_0)=t_0\).  Thus \(\rho=t_0^{-1}\).  At \(z=\rho\), the
denominator \(Q-zP\) in \eqref{eq:quadratic-integral} has the double positive
root \(t_0\); this is precisely the zero of \(\Delta\).

In a slit neighborhood of \(\rho\), the radical formula gives
\begin{equation}\label{eq:F-singular}
 F(z)=\frac{1}{\sqrt{3\sqrt2-4}}
 (1-z/\rho)^{-1/2}\bigl(1+O(1-z/\rho)\bigr).
\end{equation}
Theorem~\ref{thm:effective-algebraicity} shows that no other singularity has
modulus \(\rho\).  The algebraic transfer theorem
\cite{FlajoletSedgewick2009} applied in a dented neighborhood of \(\rho\)
gives
\[
 [z^k]F(z)=\frac{(3+2\sqrt2)^k}
 {\sqrt{\pi(3\sqrt2-4)k}}\bigl(1+O(k^{-1})\bigr).
\]
Multiplication by \(\pi\) proves \eqref{eq:sample-asymptotic}.
\end{proof}

\section{Mellin continuation and the shift equation}\label{sec:shift}

Define the Mellin completion
\begin{equation}\label{eq:Phi}
 \Phi_{c,b}(a,s)=\pi\tan\frac{\pi a}{2}+G_{c,b}(a,s).
\end{equation}
In the fundamental strip it is the symmetric principal value
\begin{equation}\label{eq:Phi-PV}
 \Phi_{c,b}(a,s)=2\PV\int_0^\infty
 \frac{t^aR_{c,b}(t)^s}{t^2-1}\dd t.
\end{equation}
Indeed, Euler's principal-value identity is
\begin{equation}\label{eq:base-PV}
 2\PV\int_0^\infty\frac{t^a}{t^2-1}\dd t
 =\pi\tan\frac{\pi a}{2}.
\end{equation}
The fixed meromorphic continuation and its shift covariance are constructed
in Proposition~\ref{prop:regularization} of
Appendix~\ref{app:regularization}.

\begin{theorem}\label{thm:master-shift}
As an identity of meromorphic functions of \(a\),
\begin{equation}\label{eq:master-shift}
 (E^2+bE+c)\Phi_{c,b}(a,s+1)
 =(cE^2+bE+1)\Phi_{c,b}(a,s).
\end{equation}
The same equation holds with \(\Phi_{c,b}\) replaced by \(G_{c,b}\).
\end{theorem}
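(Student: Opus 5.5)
The plan is to prove the identity first for \(G_{c,b}\) and then deduce it for \(\Phi_{c,b}\). Here \(E\) is the shift \(a\mapsto a+1\). The obstacle is that \(E^2\) moves \(a\) out of the strip \eqref{eq:strip}, so no single convergent integral carries both sides. I will therefore split each integral at \(t=1\), which is harmless for \(G_{c,b}\) because its integrand is regular there. Define
\[
G^{(0)}(a,s)=2\int_0^1\frac{t^a\{R^s-1\}}{t^2-1}\dd t,\qquad G^{(\infty)}(a,s)=2\int_1^\infty\frac{t^a\{R^s-1\}}{t^2-1}\dd t,
\]
with \(R=R_{c,b}\). By the endpoint expansions \eqref{eq:zero-expansion}--\eqref{eq:infty-expansion}:
\begin{itemize}
\item \(G^{(0)}\) converges for \(\operatorname{Re}a>-1\) and continues meromorphically to the left, with poles only at negative integers;
\item \(G^{(\infty)}\) converges for \(\operatorname{Re}a<1\) and continues to the right, with poles only at positive integers;
\item their sum is the continuation of \(G_{c,b}\) from Proposition~\ref{prop:analytic}, which should agree with the fixed regularization of Proposition~\ref{prop:regularization}.
\end{itemize}

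The pointwise identity driving everything is \((t^2+bt+c)R^{s+1}=(ct^2+bt+1)R^s\), which holds because \(R=(ct^2+bt+1)/(t^2+bt+c)\). Subtracting the constants gives
\[
(t^2+bt+c)\{R^{s+1}-1\}-(ct^2+bt+1)\{R^s-1\}=(ct^2+bt+1)-(t^2+bt+c)=(c-1)(t^2-1).
\]
Multiplying by \(2t^a/(t^2-1)\) and integrating over \((0,1)\), valid for \(\operatorname{Re}a>-1\), gives
\[
(E^2+bE+c)G^{(0)}(a,s+1)-(cE^2+bE+1)G^{(0)}(a,s)=2(c-1)\int_0^1t^a\dd t=\frac{2(c-1)}{a+1}.
\]
The same computation over \((1,\infty)\), valid for \(\operatorname{Re}a<-3\) so that the \(t^{a+2}\) terms converge, gives the corresponding difference for \(G^{(\infty)}\) as
\[
2(c-1)\int_1^\infty t^a\dd t=-\frac{2(c-1)}{a+1}.
\]
Each relation is an identity between meromorphic functions on a half-plane, so it holds on all of \(\mathbb C\) by uniqueness of continuation. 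Adding the two, the inhomogeneous terms cancel, which proves \eqref{eq:master-shift} for \(G_{c,b}\).

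For \(\Phi_{c,b}\), by \eqref{eq:Phi} it suffices that \(T(a)=\pi\tan(\pi a/2)\) satisfies the same equation, since \(T\) does not depend on \(s\). Here \(E^2T=T\) and \(ET=-\pi\cot(\pi a/2)\). Hence both sides equal \((1+c)T+b\,ET\), and the identity holds trivially.

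The main obstacle is the bookkeeping that the half-line continuations add up to the fixed regularization of Proposition~\ref{prop:regularization}, rather than to some other meromorphic function. I will check this by comparing both on the original strip, where they coincide with the convergent integral, and then invoking uniqueness of meromorphic continuation.
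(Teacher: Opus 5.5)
Your proof is correct, but it runs in the opposite direction from the paper's and uses different machinery.

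\textbf{The paper's route.} It proves the $\Phi$-equation first. It invokes the shift covariance $\mathcal M\{p(t)f_s(t)\}(a)=p(E)\mathcal M_s(a)$ of the fixed principal-value regularization in Proposition~\ref{prop:regularization}. It applies this with $p=Q$ to $f_{s+1}$ and with $p=P$ to $f_s$, and uses the pointwise identity $Qf_{s+1}=Pf_s$, so the homogeneous equation is a one-line transcription. The $G$-equation then follows by subtracting the two-periodic $J(a)=\pi\tan(\pi a/2)$.

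\textbf{Your route.} You work with $G$, whose integrand is regular at $t=1$, and split at $t=1$. Each half-line piece satisfies an inhomogeneous relation with forcing term $\pm2(c-1)/(a+1)$, coming from $P-Q=(c-1)(t^2-1)$. These forcing terms cancel on summation. You then add back $\pi\tan(\pi a/2)$ using $E^2T=T$.

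\textbf{What each buys.} Your route is self-contained: it needs only the endpoint expansions behind Proposition~\ref{prop:analytic} and uniqueness of meromorphic continuation. It avoids the principal value at $t=1$, the residue bookkeeping $p(1)$, and the question of whether the shifted prescription for $t^jf_s$ matches the intrinsic one. Your stated ``main obstacle'' therefore dissolves: $\Phi$ is by definition $\pi\tan(\pi a/2)+G$, and continuations are unique. The paper's route buys generality, since any polynomial relation between integrands transfers directly to a difference operator in $a$ without tracking boundary terms.

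A small point: the $(1,\infty)$ relation already holds for $\operatorname{Re}a<-1$, so your restriction to $\operatorname{Re}a<-3$ is unnecessary, though harmless.
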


\begin{proof}
Let \(P(t)=ct^2+bt+1\) and \(Q(t)=t^2+bt+c\).  The pointwise identity
\[
 Q(t)R_{c,b}(t)^{s+1}=P(t)R_{c,b}(t)^s
\]
and Proposition~\ref{prop:regularization} give \eqref{eq:master-shift} for
\(\Phi\).  The function
\(J(a)=\Phi_{c,b}(a,0)=\pi\tan(\pi a/2)\) is two-periodic.  Consequently,
\[
 (E^2+bE+c)J=(cE^2+bE+1)J.
\]
Subtracting \(J\) proves the equation for \(G\).
\end{proof}

\begin{corollary}\label{cor:sample-lattice}
For every integer \(k\), the meromorphic samples satisfy the concrete
two-dimensional recurrence
\begin{align}
 &G_{c,b}(a+2,k+1)+bG_{c,b}(a+1,k+1)+cG_{c,b}(a,k+1)\notag\\
 &\quad=cG_{c,b}(a+2,k)+bG_{c,b}(a+1,k)+G_{c,b}(a,k).
 \label{eq:sample-lattice}
\end{align}
\end{corollary}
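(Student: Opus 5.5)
The plan is to read \eqref{eq:master-shift} at \(s=k\) as an identity of meromorphic functions of \(a\), where \(E\) is the shift \(a\mapsto a+1\) used in Theorem~\ref{thm:master-shift}, and to expand the operators. Since Theorem~\ref{thm:master-shift} asserts the equation for \(G_{c,b}\) as well as for \(\Phi_{c,b}\), taking \(s=k\) and writing out \(E^jG_{c,b}(a,\cdot)=G_{c,b}(a+j,\cdot)\) yields exactly \eqref{eq:sample-lattice}.

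The steps, in order, are these.

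\begin{enumerate}
 \item Fix an integer \(k\). By Proposition~\ref{prop:analytic}, \(a\mapsto G_{c,b}(a,k)\) and \(a\mapsto G_{c,b}(a,k+1)\) extend meromorphically to \(\mathbb C\), so both sides of \eqref{eq:sample-lattice} are meromorphic in \(a\). For negative \(k\), these are the same continued functions, consistent with the reciprocity \eqref{eq:G-reciprocity} and with Corollary~\ref{cor:quadratic-algebraicity}.
 \item Specialize \(s=k\) in the \(G\)-version of \eqref{eq:master-shift}. The left operator \(E^2+bE+c\) acting on \(G_{c,b}(\cdot,k+1)\) gives \(G(a+2,k+1)+bG(a+1,k+1)+cG(a,k+1)\). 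The right operator \(cE^2+bE+1\) acting on \(G_{c,b}(\cdot,k)\) gives the right side of \eqref{eq:sample-lattice}.
 \item Conclude that the two meromorphic functions coincide. Points where individual terms have poles, which can only be integers by Proposition~\ref{prop:analytic}, are handled by the statement being an identity of meromorphic functions.
\end{enumerate}

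As a sanity check independent of the appendix, I would verify the identity directly in a region of absolute convergence. There the pointwise relation \(Q(t)R^{k+1}=P(t)R^k\) gives
\[
 \{t^2+bt+c\}\{R^{k+1}-1\}-\{ct^2+bt+1\}\{R^k-1\}=(c-1)(t^2-1).
\]
Multiplying by \(2t^a/(t^2-1)\) leaves \(2(c-1)t^a\), which is not integrable on \((0,\infty)\). This is why the statement needs the fixed Mellin regularization of Proposition~\ref{prop:regularization}, under which \(\int_0^\infty t^a\dd t\) is assigned the value zero. It is also why the identity is not a term-by-term consequence of \eqref{eq:G} inside any common strip: the shifts \(a+1\) and \(a+2\) leave the strip \(-1<\operatorname{Re}a<1\).

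The main obstacle is therefore entirely inherited: the regularized shift covariance of Proposition~\ref{prop:regularization}. The corollary itself is a specialization. If I had to justify that step independently, I would argue as follows. Multiplying by \(t\) shifts the Mellin variable, and the regularization annihilates monomials \(t^{a+j}\). So the discrepancy term \(2(c-1)t^a\) contributes zero after continuation, and both sides agree as meromorphic functions.
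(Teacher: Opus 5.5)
Your proposal is correct and matches the paper's proof, which sets \(s=k\) in the \(G\)-version of Theorem~\ref{thm:master-shift} and expands the two shift polynomials. Your side remark is also consistent with the paper: the fixed regularization annihilating the constant \(2(c-1)\) is equivalent to the two-periodicity of \(\pi\tan(\pi a/2)\), which the paper uses when it subtracts \(J\) to pass from \(\Phi_{c,b}\) to \(G_{c,b}\).
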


\begin{proof}
Set \(s=k\) in Theorem~\ref{thm:master-shift} and expand the two shift
polynomials.
\end{proof}

\section{Hausdorff moments and Hankel determinants}\label{sec:moments}

\begin{equation}\label{eq:Omega}
 \Omega=\{(c,b):c>1,\ -2\sqrt c<b<0\}.
\end{equation}
For \((c,b)\in\Omega\), write
\begin{equation}\label{eq:H-general}
 L(h)=\log R_{c,b}(e^h),\qquad
 H=\operatorname{arcosh}\left(-\frac{c+1}{b}\right),\qquad
 \Lambda=L(H).
\end{equation}

\begin{lemma}\label{lem:general-kernel}
The function \(L\) is positive on \((0,\infty)\), increases strictly from
zero to \(\Lambda\) on \((0,H)\), and decreases strictly from \(\Lambda\)
to \(\log c\) on \((H,\infty)\).  Moreover,
\begin{align}
 L(h)&=2\operatorname{artanh}
 \frac{(c-1)\sinh h}{(c+1)\cosh h+b},\label{eq:L-artanh}\\
 L'(h)&=\frac{2(c-1)\{c+1+b\cosh h\}}
 {\{(c+1)\cosh h+b\}^2-(c-1)^2\sinh^2h},\label{eq:Lprime-general}\\
 L(h)&=\frac{2(c-1)}{c+1+b}h+O(h^3)\quad(h\to0),\label{eq:L-zero-general}\\
 L(h)&=\log c+O(e^{-h})\quad(h\to\infty).\label{eq:L-infinity-general}
\end{align}
\end{lemma}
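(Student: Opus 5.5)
The plan is to write everything in terms of the symmetric variable \(h\), with \(t=e^h\), and derive the closed forms first; the monotonicity and positivity claims then follow from the sign of one explicit factor.

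\textbf{Step 1: the artanh form.} Divide numerator and denominator of \(R_{c,b}(e^h)\) by \(e^h\). This gives
\[
 R_{c,b}(e^h)=\frac{ce^h+b+e^{-h}}{e^h+b+ce^{-h}}.
\]
Set \(N=(c+1)\cosh h+b\) and \(M=(c-1)\sinh h\). The numerator equals \(N+M\) and the denominator equals \(N-M\). Since \(\log\frac{N+M}{N-M}=2\operatorname{artanh}(M/N)\) whenever \(|M/N|<1\), this proves \eqref{eq:L-artanh}. The condition \(|M|<N\) holds because both quadratics are positive on the positive axis, as noted after \eqref{eq:R}. Their sum \(2N\) is therefore positive, and their product \(N^2-M^2\) is positive as well.

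\textbf{Step 2: the derivative.} Use \(\frac{d}{dh}2\operatorname{artanh}(M/N)=2(M'N-MN')/(N^2-M^2)\). Compute
\[
 M'N-MN'=(c-1)\bigl[\cosh h\,((c+1)\cosh h+b)-(c+1)\sinh^2h\bigr]=(c-1)\bigl[c+1+b\cosh h\bigr].
\]
This gives \eqref{eq:Lprime-general}.

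\textbf{Step 3: monotonicity and positivity.} The denominator \(N^2-M^2\) is positive and \(c-1>0\), so the sign of \(L'\) is the sign of \(c+1+b\cosh h\). Because \(b<0\), this factor is strictly decreasing in \(h>0\). At \(h=0\) it equals \(c+1+b\), which is positive since \(b>-2\sqrt c\ge-(c+1)\). It vanishes exactly when \(\cosh h=-(c+1)/b\). On \(\Omega\) we have \(-(c+1)/b>(c+1)/(2\sqrt c)\ge1\), so this equation has the unique root \(H=\operatorname{arcosh}(-(c+1)/b)\). Hence \(L\) increases strictly on \((0,H)\) and decreases strictly on \((H,\infty)\), with maximum value \(\Lambda=L(H)\). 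Since \(L(0)=\log R(1)=0\), \(L\) is positive on \((0,H]\). Its limit at infinity is \(\log c>0\) by \eqref{eq:L-infinity-general}, proved below, so strict decrease from \(\Lambda\) to \(\log c\) keeps \(L>\log c>0\) on \((H,\infty)\). In particular \(\Lambda>\log c\).

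\textbf{Step 4: the asymptotics.} Near \(h=0\), \(M/N=(c-1)h/(c+1+b)+O(h^3)\), because \(M/N\) is odd in \(h\) and analytic, \(N(0)=c+1+b\neq0\). Composing with the odd function \(2\operatorname{artanh}\) gives \eqref{eq:L-zero-general}. As \(h\to\infty\),
\[
 R(e^h)=c\cdot\frac{1+(b/c)e^{-h}+O(e^{-2h})}{1+be^{-h}+O(e^{-2h})}=c\,(1+O(e^{-h})),
\]
and taking logarithms gives \eqref{eq:L-infinity-general}.

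The only mildly delicate point is in Step 3: checking that \(H\) is real and positive on \(\Omega\), and that \(c+1+b>0\). Both rest on the AM–GM bound \(2\sqrt c\le c+1\), which is strict for \(c>1\).
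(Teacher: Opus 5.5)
Your proof is correct and follows essentially the same route as the paper. Both use the hyperbolic rewriting of \(R_{c,b}(e^h)\) as a ratio of \(N\pm M\), the same sign analysis of \(c+1+b\cosh h\) using \(b<0\), and the same Taylor and leading-term expansions at the two ends. The only minor difference is that the paper gets positivity directly from \(R_{c,b}(t)-1=(c-1)(t^2-1)/(t^2+bt+c)\), while you deduce it from the monotonicity together with the limit \(\log c\); both arguments are valid.
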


\begin{proof}
The denominator of \(R_{c,b}\) is positive on the positive axis, and
\[
 R_{c,b}(t)-1=\frac{(c-1)(t^2-1)}{t^2+bt+c}.
\]
Thus \(L(h)>0\) for \(h>0\).  Direct simplification gives
\eqref{eq:L-artanh}--\eqref{eq:Lprime-general}; the denominator in
\eqref{eq:Lprime-general} is positive.  Since \(b<0\), its numerator has
the unique positive zero \(H\) and changes sign there.  This proves the
monotonicity.  Taylor expansion of \eqref{eq:L-artanh} at zero and division
of \(R_{c,b}(e^h)\) by \(e^{2h}\) at infinity give
\eqref{eq:L-zero-general}--\eqref{eq:L-infinity-general}.
\end{proof}

Pairing \(h\) and \(-h\) in \eqref{eq:Psi} gives, for
\(-1<\operatorname{Re}a<1\),
\begin{equation}\label{eq:paired-general-a}
 \Psi_{c,b}(a,n)=2\int_0^\infty\frac{L(h)^n}{\sinh h}
 \begin{cases}
  \cosh(ah),&n\ \text{odd},\\
  \sinh(ah),&n\ \text{even}
 \end{cases}\dd h.
\end{equation}

\begin{theorem}\label{thm:hausdorff}
Let \((c,b)\in\Omega\) and \(0<a<1\).  Define
\begin{equation}\label{eq:OE}
 O_j(a)=\Psi_{c,b}(a,2j+1)\quad(j\geq0),
\end{equation}
and complete the even sequence by
\begin{equation}\label{eq:E0}
 E_0(a)=\Phi_{c,b}(a,0)=\pi\tan\frac{\pi a}{2},\quad
 E_j(a)=\Psi_{c,b}(a,2j)\quad(j\geq1).
\end{equation}
There are finite positive measures \(\mu_o,\mu_e\), both with support
exactly \([0,\Lambda^2]\), such that
\begin{equation}\label{eq:moment-representations}
 O_j(a)=\int_0^{\Lambda^2}y^j\dd\mu_o(y),\quad
 E_j(a)=\int_0^{\Lambda^2}y^j\dd\mu_e(y).
\end{equation}
They are the pushforwards under \(h\mapsto L(h)^2\) of
\begin{equation}\label{eq:pushforward-measures}
 2\frac{L(h)\cosh(ah)}{\sinh h}\dd h,\quad
 2\frac{\sinh(ah)}{\sinh h}\dd h,
\end{equation}
respectively.  Thus both sequences are determinate Hausdorff moment
sequences.

More precisely, let \(s_j\) denote either \(O_j(a)\) or \(E_j(a)\).  For
every \(r\geq0\) and all index sets
\begin{equation}\label{eq:index-sets}
 0\leq \ell_0<\cdots<\ell_r,\qquad
 0\leq m_0<\cdots<m_r,
\end{equation}
one has
\begin{align}
 \det[s_{\ell_i+m_j}]_{i,j=0}^{r}&>0,\label{eq:strict-total-positivity}\\
 \det[\Lambda^2s_{\ell_i+m_j}-s_{\ell_i+m_j+1}]_{i,j=0}^{r}&>0.
 \label{eq:strict-localizing-positivity}
\end{align}
In particular,
\begin{equation}\label{eq:moment-ratio-chain}
 \frac{s_{j+1}}{s_j}<\frac{s_{j+2}}{s_{j+1}}<\Lambda^2,
 \qquad
 \lim_{j\to\infty}s_j^{1/j}
 =\lim_{j\to\infty}\frac{s_{j+1}}{s_j}=\Lambda^2.
\end{equation}
\end{theorem}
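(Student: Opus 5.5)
The plan is to read the moment representations \eqref{eq:moment-representations} directly off the paired formula \eqref{eq:paired-general-a}, and then derive every determinantal statement from the Cauchy--Binet (Andr\'eief) identity for moment matrices of a positive, atomless measure on an interval.

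\textbf{The odd and even moments.} For \(n=2j+1\), write \(L^{2j+1}=(L^2)^jL\). Then \eqref{eq:paired-general-a} reads \(O_j(a)=\int_0^\infty (L(h)^2)^j\,2L(h)\cosh(ah)/\sinh h\,\dd h\). For \(n=2j\) with \(j\ge1\), similarly \(E_j(a)=\int_0^\infty (L^2)^j\,2\sinh(ah)/\sinh h\,\dd h\). For \(j=0\) I will use the classical integral \(2\int_0^\infty \sinh(ah)/\sinh h\,\dd h=\pi\tan(\pi a/2)\). It also follows by pairing \(h,-h\) in the principal value \eqref{eq:base-PV}, since the symmetric PV at \(t=1\) becomes an absolutely convergent integral after pairing. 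This is where the normalisation \(E_0=\Phi_{c,b}(a,0)\) is matched.

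For \(0<a<1\) and \((c,b)\in\Omega\), Lemma~\ref{lem:general-kernel} gives \(L>0\) on \((0,\infty)\), so both densities in \eqref{eq:pushforward-measures} are positive and continuous on \((0,\infty)\). Finiteness follows from the endpoint behaviour. Near \(h=0\), \eqref{eq:L-zero-general} makes \(L/\sinh h\) bounded, and \(\sinh(ah)/\sinh h\to a\). At infinity, \(L\to\log c\) and both densities are \(O(e^{(a-1)h})\). Hence \(\mu_o,\mu_e\) are finite positive pushforwards under \(h\mapsto L(h)^2\).

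\textbf{Support and determinacy.} By Lemma~\ref{lem:general-kernel}, \(L^2\) maps \((0,\infty)\) onto \((0,\Lambda^2]\), with values near \(0\) attained as \(h\to0\). A pushforward of a measure with positive continuous density has support equal to the closure of the range, namely \([0,\Lambda^2]\). The map is piecewise strictly monotone: increasing on \((0,H)\) and decreasing on \((H,\infty)\). So preimages of points are finite, and the pushforward has no atoms. Compact support gives Hausdorff determinacy.

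\textbf{Strict total positivity.} For \eqref{eq:strict-total-positivity}, Andr\'eief's identity gives
\[
\det[s_{\ell_i+m_j}]=\frac{1}{(r+1)!}\int \det[y_k^{\ell_i}]\det[y_k^{m_j}]\,\dd\mu^{\otimes(r+1)}(y).
\]
Symmetrising over orderings, the integrand becomes a product of two generalized Vandermonde determinants on \(0<y_0<\dots<y_r\). Each is strictly positive by the classical Descartes-rule argument. Because \(\mu\) is atomless with infinite support inside \((0,\Lambda^2]\) up to a null set, the ordered region has positive product measure, so the determinant is strictly positive.

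The localizing inequality \eqref{eq:strict-localizing-positivity} is the same argument applied to the measure \((\Lambda^2-y)\,\dd\mu(y)\). This measure is still positive, atomless, and supported on \([0,\Lambda^2]\), since the factor vanishes only at the single point \(y=\Lambda^2\).

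\textbf{The ratio chain.} Take \(r=0\) in \eqref{eq:strict-localizing-positivity} to get \(s_{j+1}<\Lambda^2 s_j\). Take \(r=1\) with index sets \((0,1)\) and \((j,j+1)\) in \eqref{eq:strict-total-positivity} to get \(s_js_{j+2}>s_{j+1}^2\). Together these give the strictly increasing ratios bounded by \(\Lambda^2\) in \eqref{eq:moment-ratio-chain}.

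For the limits, \(s_j^{1/j}\to\max\operatorname{supp}\mu=\Lambda^2\). The upper bound is trivial. For the lower bound, restrict the integral to \([\Lambda^2-\varepsilon,\Lambda^2]\), which has positive mass. An increasing ratio sequence converges, and its limit must equal the root limit, which gives \eqref{eq:moment-ratio-chain}.

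\textbf{Main obstacle.} I expect the delicate points to be the identification of \(E_0\) with the principal value \eqref{eq:base-PV}, and the strictness of the determinants. Strictness needs the no-atom and infinite-support facts, including care that the endpoint \(y=0\) carries no mass.
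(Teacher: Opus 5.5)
Your proposal is correct and follows the paper's proof essentially step for step: moment representations read off the paired formula, exact support from the range of \(L^2\), strict positivity from Andr\'eief's identity applied to \(\mu\) and to \((\Lambda^2-y)\dd\mu(y)\), and the ratio chain from the cases \(r=0,1\) together with the root-limit squeeze. The differences are cosmetic. You quote \(2\int_0^\infty\sinh(ah)/\sinh h\dd h=\pi\tan(\pi a/2)\) as classical, or obtain it by pairing in \eqref{eq:base-PV}, whereas the paper derives it from \(1/\sinh h=2\sum_{m\ge0}e^{-(2m+1)h}\) and the tangent partial fractions. You also argue strictness via atomlessness, whereas the paper uses full interval support.
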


\begin{proof}
Formula \eqref{eq:paired-general-a} gives the two moments in
\eqref{eq:moment-representations} for \(j\geq0\) in the odd case and for
\(j\geq1\) in the even case.  For \(j=0\), expand
\(1/\sinh h=2\sum_{m\geq0}e^{-(2m+1)h}\).  Tonelli's theorem and the
partial-fraction expansion of the tangent give
\begin{align*}
 2\int_0^\infty\frac{\sinh(ah)}{\sinh h}\dd h
 &=2\sum_{m\geq0}
 \left(\frac1{2m+1-a}-\frac1{2m+1+a}\right)\\
 &=\pi\tan\frac{\pi a}{2}.
\end{align*}
Thus \(E_0(a)=\Phi_{c,b}(a,0)\).  It is not an ordinary absolutely
convergent value \(\Psi_{c,b}(a,0)\).

The endpoint expansions in Lemma~\ref{lem:general-kernel} show that the
densities in \eqref{eq:pushforward-measures} are bounded near zero and are
\(O(e^{-(1-a)h})\) at infinity.  Hence the measures are finite.  The
increasing branch maps \((0,H)\) bijectively onto \((0,\Lambda)\).
Both densities are strictly positive, so every open subinterval of
\([0,\Lambda^2]\) has positive pushforward measure.  This proves exact
support, even though \(L(h)\to\log c>0\) on the decreasing branch.
Compact support gives determinacy by the Weierstrass approximation theorem
\cite{ShohatTamarkin1943}.

For \eqref{eq:strict-total-positivity}, Andreief's identity
\cite[Eq.~(1.7)]{Forrester2019} gives
\begin{align*}
 \det[s_{\ell_i+m_j}]_{i,j=0}^{r}
 =\frac1{(r+1)!}\int_{[0,\Lambda^2]^{r+1}}
 \det[x_j^{\ell_i}]_{i,j=0}^{r}
 \det[x_j^{m_i}]_{i,j=0}^{r}
 \prod_{j=0}^{r}\dd\mu(x_j).
\end{align*}
On \(0<x_0<\cdots<x_r<\Lambda^2\), both generalized Vandermonde
determinants are positive.  Full interval support gives positive measure to
such ordered tuples, while symmetry makes their product nonnegative
everywhere.  Hence the integral is strictly positive
\cite[Chap.~III]{Karlin1968}.  Applying the same identity to
\((\Lambda^2-x)\dd\mu(x)\), which also has full interval support, proves
\eqref{eq:strict-localizing-positivity}.

Take \(r=1\), \((\ell_0,\ell_1)=(0,1)\), and
\((m_0,m_1)=(j,j+1)\) in \eqref{eq:strict-total-positivity}; this gives
strict ratio monotonicity.  The case \(r=0\) of
\eqref{eq:strict-localizing-positivity} gives
\(s_{j+1}<\Lambda^2s_j\).  Finally,
\[
 s_j\leq s_0\Lambda^{2j},
 \qquad
 s_j\geq
 \mu((\Lambda^2-\varepsilon,\Lambda^2])
 (\Lambda^2-\varepsilon)^j
\]
for every \(0<\varepsilon<\Lambda^2\).  Taking \(j\)th roots and then
\(\varepsilon\downarrow0\) proves \(s_j^{1/j}\to\Lambda^2\).  Since
\[
 s_j=s_0\prod_{\ell=0}^{j-1}\frac{s_{\ell+1}}{s_\ell},
\]
the increasing ratios have the same limit.  This proves
\eqref{eq:moment-ratio-chain}.
\end{proof}

For a sequence \(s=(s_j)_{j\geq0}\), let the forward difference be
\begin{equation}\label{eq:forward-difference}
 (\Delta s)_j=s_{j+1}-s_j.
\end{equation}

\begin{corollary}\label{cor:complete-monotonicity}
With
\[
 o_j=\frac{O_j(a)}{\Lambda^{2j}},\quad
 e_j=\frac{E_j(a)}{\Lambda^{2j}},
\]
one has, for all \(j,q\geq0\),
\begin{equation}\label{eq:complete-monotonicity}
 (-1)^q\Delta^qo_j>0,\quad (-1)^q\Delta^qe_j>0.
\end{equation}
\end{corollary}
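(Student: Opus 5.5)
The plan is to rescale the Hausdorff representation of Theorem~\ref{thm:hausdorff} to the unit interval and read off the iterated forward differences as moments of a positive polynomial weight. Let \(\mu\) denote either \(\mu_o\) or \(\mu_e\), and let \(\nu\) be the pushforward of \(\mu\) under \(y\mapsto x=y/\Lambda^2\). By \eqref{eq:moment-representations}, \(\nu\) is a finite positive measure with support exactly \([0,1]\). Dividing by \(\Lambda^{2j}\) then gives
\[
 o_j\ \text{or}\ e_j=\int_0^1 x^j\dd\nu(x),\qquad j\geq0.
\]

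Next, induction on \(q\) gives the classical formula for iterated differences. Since \((\Delta s)_j=\int_0^1x^j(x-1)\dd\nu\), repeating the step yields
\[
 (-1)^q(\Delta^q s)_j=\int_0^1 x^j(1-x)^q\dd\nu(x).
\]
The interchange of the finite difference with the integral is harmless, because \(\nu\) is finite and the integrand is a polynomial.

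It remains to prove strict positivity. The integrand \(x^j(1-x)^q\) is continuous, nonnegative, and strictly positive on \((0,1)\). Since \(\operatorname{supp}\nu=[0,1]\), the interval \([1/4,3/4]\) has positive \(\nu\)-measure. On that interval the integrand is bounded below by \(4^{-j-q}\), so the integral is at least \(4^{-j-q}\nu([1/4,3/4])>0\). This proves \eqref{eq:complete-monotonicity} for both sequences simultaneously.

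The only point needing care is the full-support statement, which is already part of Theorem~\ref{thm:hausdorff}. For the even sequence it is also worth checking that the case \(j=0\), which uses \(E_0(a)=\pi\tan(\pi a/2)\), is covered. It is, because Theorem~\ref{thm:hausdorff} establishes \(E_0\) as the total mass of \(\mu_e\). Beyond this there is no real obstacle.

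Alternatively, the case \(q=1\) follows from \eqref{eq:strict-localizing-positivity} with \(r=0\). The general \(q\) is exactly the Hausdorff characterization of complete monotonicity, with the strict inequality supplied by the full support.
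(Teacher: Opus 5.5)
Your proposal is correct and follows the same route as the paper: rescale by \(y=\Lambda^2x\) to a measure \(\nu\) on \([0,1]\) with full support, identify \((-1)^q\Delta^q\) of the moment sequence with \(\int_0^1x^j(1-x)^q\dd\nu(x)\), and conclude strict positivity from the full support. Your explicit lower bound \(4^{-j-q}\nu([1/4,3/4])\) and your check of the \(j=0\) even case spell out details that the paper leaves implicit.
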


\begin{proof}
After scaling \(y=\Lambda^2x\), either sequence is
\(m_j=\int_0^1x^j\dd\nu(x)\), where \(\nu\) has full support.  Hence
\[
 (-1)^q\Delta^qm_j=\int_0^1x^j(1-x)^q\dd\nu(x)>0.
\]
\end{proof}

\section{Dynamics, the general saddle, and log-convexity}\label{sec:saddle}

We first identify the locus on which the critical point is fixed.  This will
explain the simpler normalization found later.

\begin{proposition}\label{prop:squaring-locus}
Fix \(c>1\) and \(b>-2\sqrt c\), and exclude the cancelled degree-one
value \(b=c+1\).  The degree-two map \(R_{c,b}\) is M\"obius-conjugate to
\(z\mapsto z^2\) if and only if \(b=-2\).
\end{proposition}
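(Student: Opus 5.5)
The plan is to use the standard dynamical characterization of \(z\mapsto z^2\): a degree-two rational map is M\"obius-conjugate to \(z\mapsto z^2\) if and only if both of its critical points are fixed. Necessity is clear, because conjugation carries critical points to critical points and fixed points to fixed points, and \(z^2\) has the fixed critical points \(0,\infty\). For sufficiency, suppose \(f\) has two distinct fixed critical points. Conjugate them to \(0\) and \(\infty\). A degree-two map with critical points \(0,\infty\) has the form \(\lambda z^{\pm2}\). Fixing \(0\) forces the form \(\lambda z^2\), and the scaling \(z\mapsto\lambda z\) then conjugates \(\lambda z^2\) to \(z^2\). So the proposition reduces to an algebraic computation for \(R_{c,b}\).

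First I will check that \(R_{c,b}\) has degree two away from \(b=c+1\). The resultant of \(ct^2+bt+1\) and \(t^2+bt+c\) vanishes only when they share a root. Subtracting the two quadratics gives \((c-1)(t^2-1)\), so a common root is \(\pm1\). The root \(t=1\) requires \(b=-(c+1)<-2\sqrt c\), which is excluded. The root \(t=-1\) requires \(b=c+1\), the excluded value.

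Next I will compute both point sets.

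\emph{Critical points.} The numerator of \(R_{c,b}'\) is
\[
(2ct+b)(t^2+bt+c)-(ct^2+bt+1)(2t+b)=(c-1)\{bt^2+2(c+1)t+b\}.
\]
If \(b=0\), the critical points are \(0\) and \(\infty\). Since \(R(0)=1/c\neq0\), they are not fixed, and \(b=0\neq-2\) is consistent with the claim.

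If \(b\neq0\), the critical points are the two roots of \(t^2+\tfrac{2(c+1)}{b}t+1\). They are distinct, because \(b^2\neq(c+1)^2\) under admissibility together with \(b\neq c+1\).

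\emph{Fixed points.} The equation \(R(t)=t\) reads \(t^3+(b-c)t^2+(c-b)t-1=0\), which factors as
\[
(t-1)\{t^2+(1+b-c)t+1\}=0.
\]
The point \(\infty\) is not fixed, since \(R(\infty)=c\).

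Now I compare the two sets. Neither critical point equals \(1\), because \(t=1\) is critical only when \(b=-(c+1)\). Hence both critical points are fixed exactly when they are the two roots of \(t^2+(1+b-c)t+1\). Both quadratics are monic with constant term \(1\), so this happens iff their middle coefficients agree:
\[
-\frac{2(c+1)}{b}=c-1-b
\iff b^2-(c-1)b-2(c+1)=0
\iff (b+2)\bigl(b-(c+1)\bigr)=0.
\]
Excluding \(b=c+1\) leaves exactly \(b=-2\). Conversely, at \(b=-2\) the two quadratics coincide, so both distinct critical points are fixed, and the characterization gives the conjugacy.

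The step needing the most care is the converse direction of the characterization: ensuring the two critical points are distinct and genuinely fixed, rather than one of them colliding with the fixed point \(1\) or with \(\infty\). The admissibility hypotheses handle exactly these degenerate cases, as checked above.
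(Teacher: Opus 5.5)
Your proof is correct and follows essentially the paper's route: the same characterization of \(z\mapsto z^2\) by two distinct fixed critical points, the same critical quadratic \(bt^2+2(c+1)t+b\) and nontrivial fixed-point quadratic \(t^2+(1+b-c)t+1\), the same comparison of middle coefficients giving \((b+2)(b-c-1)=0\), and the same separate treatment of \(b=0\); your checks that \(1\) is never critical and \(\infty\) is never fixed make explicit a step the paper leaves implicit. One small slip: a degree-two map with critical points \(0,\infty\) is in general \(\mu(z^2)\) for a M\"obius \(\mu\) (e.g.\ \((z^2+1)/(z^2-1)\)), not \(\lambda z^{\pm2}\), so what forces \(\lambda z^2\) is that both \(0\) and \(\infty\) are fixed, which gives a double zero at \(0\) and a double pole at \(\infty\).
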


\begin{proof}
Differentiation gives
\begin{equation}\label{eq:R-derivative-dynamics}
 R_{c,b}'(t)=
 \frac{(c-1)\{bt^2+2(c+1)t+b\}}
 {(t^2+bt+c)^2}.
\end{equation}
Apart from \(t=1\), the fixed points satisfy
\[
 t^2+(b-c+1)t+1=0.
\]
The admissibility condition excludes \(b=-(c+1)\), because
\(-(c+1)<-2\sqrt c\).  The other value \(b=c+1\) is the cancelled case
excluded in the statement.  Thus, when \(b\ne0\), the two roots of the
critical quadratic are distinct.

For completeness, a degree-two rational map with two distinct fixed critical
points is conjugate to \(z^2\).  Indeed, send the critical points to zero and
infinity.  The conjugated map fixes both, has local degree two at both, and
therefore has the form \(\lambda z^2\); a scaling removes \(\lambda\).
The converse is immediate.  This is the degree-two fixed-point case of the
bicritical normal form; see \cite[Section~1]{Milnor2000}.
If \(b=0\), the critical points are zero and infinity, and neither is fixed.
Assume therefore that \(b\ne0\).
The critical polynomial from \eqref{eq:R-derivative-dynamics}, made monic,
equals the nontrivial fixed-point polynomial precisely when
\[
 \frac{2(c+1)}b=b-c+1.
\]
Thus \((b+2)(b-c-1)=0\).  The second value cancels the factor \(t+1\) and
has degree one.  The only admissible degree-two value is \(b=-2\).
\end{proof}

For \(b=-2\), the critical points are \(e^{-H},e^H\), with \(H\) from
\eqref{eq:H-general}.  The map
\begin{equation}\label{eq:nu}
 \nu(t)=-e^H\frac{t-e^{-H}}{t-e^H}
\end{equation}
satisfies
\begin{equation}\label{eq:duplication}
 \nu(R_{c,-2}(t))=\nu(t)^2,\quad \nu(1/t)=\nu(t)^{-1}.
\end{equation}
This follows by sending the two fixed critical points to zero and infinity
and using \(\nu(1)=1\).  Identity \eqref{eq:duplication} is a genus-zero
duplication law; no modular parametrization is used here.

\begin{theorem}\label{thm:general-saddle}
For \((c,b)\in\Omega\), retain \(H\) and \(\Lambda\) from
\eqref{eq:H-general}, and set
\begin{equation}\label{eq:saddle-parameters}
 \delta=(c+1)^2-b^2,\quad \varepsilon=4c-b^2.
\end{equation}
Then
\begin{align}
 \Lambda=L(H)
 &=\log\frac{\sqrt\delta+c-1}{\sqrt\delta-c+1},\label{eq:Lambda}\\
 D=-L''(H)&=\frac{2(c-1)b^2}{\sqrt\delta\,\varepsilon}>0.
 \label{eq:D-saddle}
\end{align}
Moreover, \(\Lambda=H\) if and only if \(b=-2\).  Thus the dynamically
distinguished locus in Proposition~\ref{prop:squaring-locus} is exactly the
fixed-saddle locus.
\end{theorem}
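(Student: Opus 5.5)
The plan is to evaluate everything at the critical point by substituting the explicit values of \(\cosh H\) and \(\sinh H\) into the closed forms of Lemma~\ref{lem:general-kernel}. For the third claim I will use the dynamical reformulation from Proposition~\ref{prop:squaring-locus} rather than solving a transcendental equation.

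\textbf{Hyperbolic values at \(H\).} From \eqref{eq:H-general}, \(\cosh H=-(c+1)/b\). Since \(b<0\) and \(H>0\), this gives \(\sinh H=\sqrt\delta/(-b)\). Admissibility in \(\Omega\) gives \(\delta>0\), because \(b^2<4c<(c+1)^2\). It also gives
\[
\delta-(c-1)^2=4c-b^2=\varepsilon>0,
\]
so \(\sqrt\delta>c-1\). With these values:
\begin{itemize}
\item \((c+1)\cosh H+b=-\delta/b\);
\item the argument of the artanh in \eqref{eq:L-artanh} becomes \((c-1)/\sqrt\delta\), which lies in \((0,1)\) by the previous inequality;
\item \(2\operatorname{artanh}x=\log\{(1+x)/(1-x)\}\) then gives \eqref{eq:Lambda} directly.
\end{itemize}

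\textbf{Second derivative.} Write \(L'=2(c-1)N/M\) as in \eqref{eq:Lprime-general}, with \(N(h)=c+1+b\cosh h\). Since \(N(H)=0\), only the derivative of the numerator survives:
\[
L''(H)=\frac{2(c-1)N'(H)}{M(H)},\qquad N'(H)=b\sinh H=-\sqrt\delta .
\]
For the denominator,
\[
M(H)=\frac{\delta^2}{b^2}-(c-1)^2\frac{\delta}{b^2}=\frac{\delta\varepsilon}{b^2}.
\]
Dividing yields \eqref{eq:D-saddle}, and positivity follows from \(c>1\), \(b\neq0\), \(\delta>0\) and \(\varepsilon>0\). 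This is routine once the substitutions are organized as above.

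\textbf{Characterizing \(\Lambda=H\).} This is the only step requiring an idea. Put \(t_0=e^H\). Then \(\Lambda=H\) means \(R_{c,b}(t_0)=t_0\), that is, the critical point \(t_0\) is fixed.
\begin{itemize}
\item By \eqref{eq:R-derivative-dynamics}, \(t_0\) is a root of the critical quadratic \(bt^2+2(c+1)t+b\).
\item Its nontrivial fixed points are the roots of \(t^2+(b-c+1)t+1\).
\item Both polynomials are palindromic. After making the critical one monic, both have the form \(t^2+\alpha t+1\).
\item A common root \(t_0\) therefore forces \(1/t_0\) to be common as well. Since \(t_0\neq1\) (because \(H>0\)) and \(t_0\neq-1\), the two monic quadratics coincide.
\end{itemize}
Coincidence is exactly the condition \(2(c+1)/b=b-c+1\) from the proof of Proposition~\ref{prop:squaring-locus}, i.e. \((b+2)(b-c-1)=0\). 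With \(b<0\) this leaves only \(b=-2\). Conversely, if \(b=-2\), that proposition shows both critical points \(e^{\pm H}\) are fixed, so \(L(H)=H\).

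As an independent check one can also verify the identity \(-b(\sqrt\delta+c-1)^2=(c+1+\sqrt\delta)\varepsilon\) at \(b=-2\), using \(\delta=(c-1)(c+3)\) and \(\varepsilon=4(c-1)\). The final sentence of the theorem is then just the conjunction of this equivalence with Proposition~\ref{prop:squaring-locus}.
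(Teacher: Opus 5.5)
Your proposal is correct and follows essentially the same route as the paper. You evaluate \eqref{eq:L-artanh} and differentiate \eqref{eq:Lprime-general} at \(H\) using \(c+1+b\cosh H=0\), and then reduce \(\Lambda=H\) to a common root of the critical and nontrivial fixed-point quadratics, which forces \((b+2)(b-c-1)=0\). The paper phrases this last step as equality of the two values of \(t+t^{-1}\), which is equivalent to your remark that two monic palindromic quadratics sharing a root \(t_0\neq0\) must coincide.
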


\begin{proof}
Lemma~\ref{lem:general-kernel} gives positivity and the unique maximum.
Evaluation of \eqref{eq:L-artanh} and differentiation of
\eqref{eq:Lprime-general} at \(H\) give
\eqref{eq:Lambda}--\eqref{eq:D-saddle}.  The
inequalities defining \(\Omega\) imply \(\delta>0\) and \(\varepsilon>0\).

Put \(t=e^H\).  The equality \(\Lambda=H\) is equivalent to
\(R_{c,b}(t)=t\).  The critical equation gives
\(t+t^{-1}=-2(c+1)/b\), while the nontrivial fixed-point equation gives
\(t+t^{-1}=c-b-1\).  Their equality is
\((b+2)(b-c-1)=0\).  Since \(b<0\), only \(b=-2\) remains.
\end{proof}

\begin{theorem}\label{thm:general-sample-asymptotic}
Let \((c,b)\in\Omega\) and let \(-1<a<1\) be real.  With \(H,\Lambda,D\)
as in Theorem~\ref{thm:general-saddle},
\begin{equation}\label{eq:general-sample-asymptotic}
 G_{c,b}(a,k)=
 \frac{e^{aH}}{\sinh H}\sqrt{\frac{2\pi}{kD}}\,
 e^{k\Lambda}\{1+O(k^{-1})\},\qquad k\to\infty.
\end{equation}
Consequently, the positive-sample generating series
\(\sum_{k\geq1}G_{c,b}(a,k)z^k\) has radius \(e^{-\Lambda}\).  On the
fixed-critical locus \(b=-2\), this radius is \(e^{-H}\).
\end{theorem}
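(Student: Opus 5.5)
We apply Laplace's method directly to the defining integral \eqref{eq:G} at \(s=k\), written in the variable \(h=\log t\).

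\textbf{Reduction.} Substituting \(t=e^h\) gives
\[
 G_{c,b}(a,k)=\int_{-\infty}^{\infty}\frac{e^{ah}\{e^{kL(h)}-1\}}{\sinh h}\dd h,
\]
where \(L(h)=\log R_{c,b}(e^h)\) and \(L(-h)=-L(h)\) by \eqref{eq:reciprocity-R}.

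\textbf{Error terms.} Split the integral into three pieces.
\begin{enumerate}
 \item The term \(-\int e^{ah}/\sinh h\) is handled before splitting. Since \(\{e^{kL}-1\}/\sinh h\) is regular at \(0\), pair \(h\) with \(-h\) on \((0,\infty)\). The subtracted part \(\int_0^\infty 2\sinh(ah)/\sinh h\dd h=\pi\tan(\pi a/2)\) is \(O(1)\) by the computation in the proof of Theorem~\ref{thm:hausdorff}.
 \item On \(h<0\) we have \(L\le0\), so \(e^{kL}\) is bounded. By dominated convergence the contribution is \(O(1)\), or \(O(\log k)\) at worst near the origin, which we control using \eqref{eq:L-zero-general}.
 \item On \((0,\infty)\) the integrand is \(e^{ah}e^{kL(h)}/\sinh h\) minus the pieces above, with \(1/\sinh h\) not integrable at \(0\). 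To avoid this, split at a small \(\eta>0\). On \((0,\eta)\), \(e^{kL}-1\le e^{kL(\eta)}\) and the quotient \((e^{kL}-1)/\sinh h\) is bounded by \(k\,C\,e^{kL(\eta)}\). Since \(L(\eta)<\Lambda\), this is exponentially smaller than \(e^{k\Lambda}\).
\end{enumerate}

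\textbf{Main term.} By Lemma~\ref{lem:general-kernel}, on \([\eta,\infty)\) the phase \(L\) has a unique nondegenerate maximum at \(H\). Away from \(H\), \(L\) is bounded by \(\max(L(H\pm\delta'),\log c)<\Lambda\), because \(\log c<\Lambda\) by strict decrease. The amplitude \(e^{ah}/\sinh h\) is integrable at infinity since \(a<1\), and there it contributes \(O(e^{k\log c})\), which is exponentially smaller. The standard Laplace theorem with smooth amplitude \(g(h)=e^{ah}/\sinh h\) and \(L''(H)=-D\) from \eqref{eq:D-saddle} then gives
\[
 g(H)e^{k\Lambda}\sqrt{2\pi/(kD)}\,\{1+O(k^{-1})\},
\]
which is exactly \eqref{eq:general-sample-asymptotic}. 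All the discarded pieces are \(O(e^{k\Lambda}k^{-N})\) for every \(N\), so they are absorbed.

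\textbf{Radius.} The radius follows from the Cauchy--Hadamard formula, since \(G(a,k)^{1/k}\to e^{\Lambda}\) and the prefactor is positive and nonzero: \(\sinh H>0\) and \(e^{aH}>0\). On \(b=-2\), Theorem~\ref{thm:general-saddle} gives \(\Lambda=H\).

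The main obstacle is bookkeeping near \(h=0\), where the \(-1\) regularization interacts with \(1/\sinh h\). Splitting at a fixed \(\eta\) and bounding crudely there should suffice, because every such contribution is exponentially dominated by \(e^{k\Lambda}\).
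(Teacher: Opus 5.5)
Your proposal is correct and follows essentially the same route as the paper. Both pass to \(h=\log t\), apply the ordinary Laplace expansion at the unique nondegenerate maximum \(H\) with amplitude \(e^{ah}/\sinh h\) and \(L''(H)=-D\), show that everything away from \(H\) is either exponentially smaller than \(e^{k\Lambda}\) or only polynomially large, and obtain the radius from \(G_{c,b}(a,k)^{1/k}\to e^{\Lambda}\) by Cauchy--Hadamard. The only blemishes are in the bookkeeping near \(h=0\), and they are harmless:
your step~1 (subtracting \(\pi\tan(\pi a/2)\)) is unnecessary and does not fit steps~2--3, which keep \(e^{kL}-1\); the \(h<0\) piece is genuinely of order \(\log k\), not \(O(1)\), since dominated convergence fails there, but any polynomial bound suffices; and the paper avoids both issues by pairing \(h\) with \(-h\) into two positive terms and using \(e^{kL}-1\le kLe^{kL}\) and \(1-e^{-kL}\le kL\).
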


\begin{proof}
The change \(t=e^h\), followed by pairing \(h\) and \(-h\), gives
\begin{align}
 G_{c,b}(a,k)=\int_0^\infty\frac{1}{\sinh h}
 \Bigl[&e^{ah}\{e^{kL(h)}-1\}
 +e^{-ah}\{1-e^{-kL(h)}\}\Bigr]\dd h.
 \label{eq:paired-samples}
\end{align}
Both terms are positive.  Choose a small closed neighborhood \(V\) of the
unique maximum \(H\), disjoint from zero.  On \(V\), the first term in
\eqref{eq:paired-samples} has phase \(L\), amplitude
\(e^{ah}/\sinh h\), and
\[
\begin{aligned}
 L(H)&=\Lambda, & L'(H)&=0, & L''(H)&=-D.
\end{aligned}
\]
The ordinary Laplace expansion therefore gives
\[
 \int_V\frac{e^{ah+kL(h)}}{\sinh h}\dd h
 =\frac{e^{aH}}{\sinh H}\sqrt{\frac{2\pi}{kD}}
 e^{k\Lambda}\{1+O(k^{-1})\}.
\]

It remains to control the complement.  Strict maximality and
\(L(h)\to\log c<\Lambda\) give an \(\eta>0\) such that
\(L\leq\Lambda-\eta\) outside \(V\), apart from an arbitrarily small
interval at zero where the same inequality follows from \(L(h)=O(h)\).
There
\[
 e^{kL(h)}-1\leq kL(h)e^{kL(h)},\qquad
 1-e^{-kL(h)}\leq kL(h).
\]
Since \(L(h)/\sinh h\) is bounded at zero and the remaining endpoint
weights are integrable for \(|a|<1\), the complementary contribution is
\(O(k e^{k(\Lambda-\eta)})+O(k)\).  This is negligible relative to the
main term and proves \eqref{eq:general-sample-asymptotic}.

The asymptotic and the positivity in \eqref{eq:paired-samples} give
\begin{equation}\label{eq:sample-root-limit}
 \lim_{k\to\infty}G_{c,b}(a,k)^{1/k}=e^\Lambda.
\end{equation}
The Cauchy--Hadamard formula therefore gives the radius
\(e^{-\Lambda}\).  The last assertion follows from
Theorem~\ref{thm:general-saddle}.
\end{proof}

For later use, pairing positive and negative \(h\) gives
\begin{equation}\label{eq:paired-diagonal}
 \Psi_{c,b}(1/M,n)=2\int_0^\infty\frac{L(h)^n}{\sinh h}
 \begin{cases}
  \cosh(h/M),&n\ \text{odd},\\
  \sinh(h/M),&n\ \text{even}
 \end{cases}\dd h.
\end{equation}
Every integrand in \eqref{eq:paired-diagonal} is positive.

\begin{theorem}\label{thm:continuous-logconvexity}
For \(q\in\{0,1\}\), \(u>1\), and either choice of sign, define
\begin{align}
 I_{q,+}(u)&=2\int_0^\infty
 \frac{L(h)^{u-q}\cosh(h/u)}{\sinh h}\dd h,\label{eq:I-plus}\\
 I_{q,-}(u)&=2\int_0^\infty
 \frac{L(h)^{u-q}\sinh(h/u)}{\sinh h}\dd h.\label{eq:I-minus}
\end{align}
Each of the four functions is strictly log-convex on \((1,\infty)\).
Consequently, with
\begin{equation}\label{eq:AB-general}
 A_M=\Psi_{c,b}(1/M,M),\quad
 B_M=\Psi_{c,b}(1/M,M-1),\quad M\geq2,
\end{equation}
one has
\begin{equation}\label{eq:parity-logconvexity}
 A_MA_{M+4}>A_{M+2}^2,\quad
 B_MB_{M+4}>B_{M+2}^2.
\end{equation}
On either parity class, both quotient sequences increase strictly and obey
\begin{equation}\label{eq:quotient-upper}
 \frac{A_{M+2}}{A_M}<\Lambda^2,\quad
 \frac{B_{M+2}}{B_M}<\Lambda^2.
\end{equation}
\end{theorem}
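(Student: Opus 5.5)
The plan is to prove log-convexity pointwise in \(h\) and then integrate, using H\"older's inequality. Write the integrand of \(I_{q,\pm}(u)\) as \(2L(h)^{-q}\exp\{u\log L(h)+f_\pm(h/u)\}/\sinh h\), where \(f_+=\log\cosh\) and \(f_-=\log\sinh\). The factor \(u\log L(h)\) is linear in \(u\), and \(L(h)>0\) for \(h>0\) by Lemma~\ref{lem:general-kernel}. So it suffices to show that \(u\mapsto f_\pm(h/u)\) is strictly convex on \((1,\infty)\) for each fixed \(h>0\).

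\textbf{Convexity of the weights.} Put \(x=h/u>0\). A direct computation gives
\[
 \frac{\dd^2}{\dd u^2}\log\cosh\frac hu=\frac1{u^2}\bigl\{2x\tanh x+x^2\sech^2x\bigr\},
 \qquad
 \frac{\dd^2}{\dd u^2}\log\sinh\frac hu=\frac1{u^2}\bigl\{2x\coth x-x^2\csch^2x\bigr\}.
\]
The first expression is clearly positive. The second is positive because \(2x\coth x>x^2\csch^2x\) is equivalent to \(2\sinh x\cosh x=\sinh 2x>x\). This sinh case is the only delicate point, since \(\log\sinh\) itself is concave in \(x\); the composition with \(u\mapsto h/u\) is what rescues convexity. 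So for every \(h>0\) the integrand is a strictly log-convex function of \(u\).

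\textbf{Finiteness and strictness.} Next I check that each \(I_{q,\pm}(u)\) is finite for \(u>1\), using the endpoint expansions of Lemma~\ref{lem:general-kernel}. Near zero, \(L(h)\asymp h\). Hence \(L^{u-q}\cosh(h/u)/\sinh h=O(h^{u-q-1})\), which is integrable since \(u-q-1>-1\). Likewise \(L^{u-q}\sinh(h/u)/\sinh h=O(h^{u-q})\). At infinity, \(L\to\log c\) and the weight is \(O(e^{-(1-1/u)h})\). Now take \(u_1\neq u_2\) and \(\theta\in(0,1)\). Pointwise strict log-convexity gives \(g(\theta u_1+(1-\theta)u_2,h)<g(u_1,h)^\theta g(u_2,h)^{1-\theta}\) for every \(h>0\). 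Integrating and applying H\"older with exponents \(1/\theta\) and \(1/(1-\theta)\) yields \(I(\theta u_1+(1-\theta)u_2)<I(u_1)^\theta I(u_2)^{1-\theta}\). The inequality is strict because the pointwise inequality is strict on a set of positive measure, namely all of \((0,\infty)\).

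\textbf{Application to \(A_M,B_M\).} By \eqref{eq:paired-diagonal}, \(A_M=I_{0,+}(M)\) for odd \(M\) and \(A_M=I_{0,-}(M)\) for even \(M\). Similarly, \(B_M=I_{1,+}(M)\) for even \(M\) and \(B_M=I_{1,-}(M)\) for odd \(M\). Here \(M\geq2>1\). The step \(M\mapsto M+2\) preserves parity, so \(M,M+2,M+4\) all use the same function. Midpoint strict log-convexity at \(u_1=M\), \(u_2=M+4\) then gives \eqref{eq:parity-logconvexity}, which is equivalent to strict increase of \(A_{M+2}/A_M\) and \(B_{M+2}/B_M\) along each parity class.

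\textbf{Upper bound.} For \eqref{eq:quotient-upper}, compare integrands directly. For \(h>0\), \(\cosh(h/(M+2))<\cosh(h/M)\) and \(\sinh(h/(M+2))<\sinh(h/M)\). Also \(L(h)^2\leq\Lambda^2\), with equality only at \(h=H\), by Lemma~\ref{lem:general-kernel}. Hence the integrand of \(A_{M+2}\) is strictly less than \(\Lambda^2\) times that of \(A_M\) almost everywhere, so \(A_{M+2}<\Lambda^2A_M\). The same comparison gives \(B_{M+2}<\Lambda^2B_M\).
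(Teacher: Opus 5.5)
Your proposal is correct and follows the paper's proof essentially step for step. It uses the same second-derivative computations for \(\log\cosh(h/u)\) and \(\log\sinh(h/u)\) (including the \(\sinh 2x>x\) check), treats \(L(h)^{u-q}\) as an affine contribution, obtains strictness from H\"older's inequality, identifies \(A_M,B_M\) with \(I_{q,\pm}(M)\) by parity via \eqref{eq:paired-diagonal}, and gets the \(\Lambda^2\) bound by direct comparison of positive integrands; your explicit endpoint orders \(O(h^{u-q-1})\) and \(O(e^{-(1-1/u)h})\) just spell out the convergence step the paper states in one line.
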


\begin{proof}
The endpoint behavior \(L(h)=O(h)\) at zero and
\(L(h)=\log c+O(e^{-h})\) at infinity proves convergence for \(u>1\).
For \(z=h/u>0\),
\begin{align*}
 \frac{\dd^2}{\dd u^2}\log\cosh(h/u)
 &=\frac{2z\tanh z+z^2\sech^2z}{u^2}>0,\\
 \frac{\dd^2}{\dd u^2}\log\sinh(h/u)
 &=\frac{2z\coth z-z^2\csch^2z}{u^2}\\
 &=\frac{z\{\sinh(2z)-z\}}{u^2\sinh^2z}>0.
\end{align*}
The factor \(L(h)^{u-q}\) contributes an affine function of \(u\) to the
pointwise logarithm.  Pointwise strict log-convexity followed by H\"older's
inequality proves strict log-convexity of the integrals; strictness holds on
a set of positive measure.  Formula \eqref{eq:paired-diagonal} identifies
\(A_M\) with \(I_{0,+}(M)\) for odd \(M\) and with \(I_{0,-}(M)\) for
even \(M\).  It identifies \(B_M\) with \(I_{1,-}(M)\) for odd \(M\) and
with \(I_{1,+}(M)\) for even \(M\).  This proves
\eqref{eq:parity-logconvexity} and strict quotient monotonicity.

Since \(0<L(h)\leq\Lambda\), with equality at only one point, and both
\(\cosh(h/M)\) and \(\sinh(h/M)\) decrease when \(M\) is replaced by
\(M+2\), direct comparison of the positive integrands gives
 \eqref{eq:quotient-upper}.
\end{proof}

\section{Two-term diagonal asymptotics}\label{sec:asymptotic}

Retain the notation of Theorem~\ref{thm:general-saddle}.  Define
\begin{align}
 L_3&=L'''(H)=
 \frac{6(c-1)(c+1)b^2}{\delta\varepsilon},\label{eq:L3}\\
 L_4&=L''''(H)\notag\\
 &=-\frac{2(c-1)b^2\{\delta+6(c+1)^2\}}
 {\delta^{3/2}\varepsilon}
 +\frac{12(c-1)^3b^4}{\delta^{3/2}\varepsilon^2},\label{eq:L4}\\
 K&=\frac2{\sinh H}\sqrt{\frac{2\pi\Lambda}{D}}.
 \label{eq:K-general}
\end{align}
For compact formulas put
\begin{align}
 \kappa_0={}&\Lambda\left\{
 \frac{1+2\csch^2H}{2D}
 -\frac{L_3\coth H}{2D^2}
 +\frac{L_4}{8D^2}
 +\frac{5L_3^2}{24D^3}\right\}-\frac38,
 \label{eq:kappa-zero}\\
 \kappa_1={}&\kappa_0+\frac{\Lambda}{H}
 \left\{\frac{L_3}{2D^2}-\frac{\coth H}{D}\right\}.
 \label{eq:kappa-one}
\end{align}

\begin{theorem}\label{thm:two-term-asymptotics}
As \(M\to\infty\),
\begin{equation}\label{eq:A-two-term}
 A_M=
 \begin{cases}
 K\Lambda^M M^{-1/2}\{1+\kappa_0/M+O(M^{-2})\},&M\ \text{odd},\\
 KH\Lambda^M M^{-3/2}\{1+\kappa_1/M+O(M^{-2})\},&M\ \text{even},
 \end{cases}
\end{equation}
and
\begin{equation}\label{eq:B-two-term}
 B_M=
 \begin{cases}
 KH\Lambda^{M-1}M^{-3/2}
 \{1+(\kappa_1+1/2)/M+O(M^{-2})\},&M\ \text{odd},\\
 K\Lambda^{M-1}M^{-1/2}
 \{1+(\kappa_0+1/2)/M+O(M^{-2})\},&M\ \text{even}.
 \end{cases}
\end{equation}
All four expansions are locally uniform in \(\Omega\).  More precisely, for
every compact \(C\subset\Omega\), there exist constants \(M_C,C_C>0\) such
that each displayed relative remainder has absolute value at most
\(C_C/M^2\) for \(M\geq M_C\).
\end{theorem}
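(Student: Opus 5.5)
We will apply Laplace's method with a moving weight to the paired representation \eqref{eq:paired-diagonal}. For \(A_M\) we have \(n=M\), so the integrand is \(2L(h)^M w_M(h)/\sinh h\). The weight is \(w_M=\cosh(h/M)\) when \(M\) is odd and \(w_M=\sinh(h/M)\) when \(M\) is even. For \(B_M\) we have \(n=M-1\), the parities swap, and \(L^{M-1}=L^M/L\). Write \(L(h)^M=\exp\{M\log L(h)\}\). By Lemma~\ref{lem:general-kernel}, \(\log L\) has a unique strict maximum \(\log\Lambda\) at \(H\), tends to \(\log\log c<\log\Lambda\) at infinity, and tends to \(-\infty\) at zero. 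So outside a fixed neighborhood \(V\) of \(H\) the integral is \(O((\Lambda-\eta)^M)\). Near zero we use \(L(h)^M/\sinh h=O(h^{M-1})\), and the weights are bounded by \(\cosh h\) times an integrable factor for \(M\geq2\).

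Inside \(V\), set \(h=H+x\) and \(f(h)=\log L(h)\). Then \(f(H)=\log\Lambda\), \(f''(H)=-D/\Lambda\), \(f'''(H)=L_3/\Lambda+3DL'(H)\cdots\). Concretely, we compute \(f^{(3)},f^{(4)}\) from \(L_3,L_4,D,\Lambda\) using \(L'(H)=0\):
\[
 f''=-\frac{D}{\Lambda},\quad f'''=\frac{L_3}{\Lambda},\quad f''''=\frac{L_4}{\Lambda}-\frac{3D^2}{\Lambda^2}.
\]
We expand the amplitude \(g_M(h)=2w_M(h)/\sinh h\) in powers of \(1/M\), because the weight itself depends on \(M\):
\[
 \cosh(h/M)=1+\frac{h^2}{2M^2}+\cdots,\qquad \sinh(h/M)=\frac hM\Bigl(1+\frac{h^2}{6M^2}+\cdots\Bigr).
\]
To relative order \(M^{-1}\), the odd weight is therefore the constant \(1\), and the even weight is \(h/M\). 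This explains the extra factor \(H M^{-1}\). Next we apply the standard two-term Laplace formula
\[
 \int g\,e^{Mf}\,dh=e^{Mf(H)}\sqrt{\frac{2\pi}{M|f''|}}\Bigl\{g+\frac1M\Bigl(\frac{g''}{2|f''|}+\frac{g'f'''}{2f''^2}+\frac{gf''''}{8f''^2}+\frac{5gf'''^2}{24|f''|^3}\Bigr)+O(M^{-2})\Bigr\}.
\]
Sign conventions will be checked against the standard form. The leading constant is \(\frac{2}{\sinh H}\sqrt{2\pi\Lambda/D}=K\). We then identify the correction terms. For the odd case we take \(g=2/\sinh h\), which gives \(g''/g=1+2\csch^2H\) and \(g'/g=-\coth H\). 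Substituting, the \(-3D^2/\Lambda^2\) part of \(f''''\) produces exactly \(-\tfrac38\), which reproduces \(\kappa_0\). For the even case we take \(g=2h/\sinh h\). Its logarithmic derivative gains \(1/H\), and \(g''/g\) gains \(2(1/H)(-\coth H)\). These extra terms give the additional bracket in \(\kappa_1\), namely \((\Lambda/H)\{L_3/(2D^2)-\coth H/D\}\).

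For \(B_M\) we write \(L^{M-1}w=L^M\cdot(w/L)\). The amplitude gains the factor \(1/L(h)\), whose value at \(H\) is \(1/\Lambda\), so the leading term carries \(\Lambda^{M-1}\). Since \(L'(H)=0\), the derivative of this factor at \(H\) vanishes, and its second derivative is \(D/\Lambda^2\). Multiplied by \(\Lambda/(2D)\), that contributes \(+\tfrac{1}{2\Lambda}\cdot\Lambda=\tfrac12\), which gives the shifts \(\kappa+\tfrac12\). We still need to double-check this against the normalization \(\Lambda^{M-1}\) of the leading term.

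\textbf{Main obstacle.} The delicate point is bookkeeping rather than analysis. The \(O(M^{-2})\) remainder must be uniform on compact \(C\subset\Omega\). The weight's \(M\)-dependence enters only at order \(M^{-2}\), but this must be verified. We will write the integrand as \(e^{Mf}\) times an amplitude analytic in \((h,1/M)\), and invoke the Laplace expansion with remainder, whose error constant depends continuously on bounds for \(f\), \(g\) and finitely many derivatives on \(V\) and on \(\eta\). All of these vary continuously with \((c,b)\), and \(D,\Lambda-\log c,H\) are bounded away from zero on \(C\). Uniformity then follows by choosing one \(V\) and one \(\eta\) for all of \(C\).
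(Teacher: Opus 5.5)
Your proposal is correct and follows essentially the paper's own route. Both apply Laplace's method to the paired representation with phase \(\log(L/\Lambda)\) and amplitudes \(1/\sinh h\) and \(h/\sinh h\), use the same two-term coefficient formula, use the weight expansions \(\cosh(h/M)=1+O(M^{-2})\) and \(\sinh(h/M)=(h/M)(1+O(M^{-2}))\) on the saddle neighborhood, and obtain uniformity from compactness.

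The only difference is cosmetic and concerns \(B_M\). The paper takes the large parameter \(N=M-1\) and gets the extra \(\tfrac12\) from \((M-1)^{-1/2}=M^{-1/2}(1+\tfrac1{2M}+\cdots)\). You instead absorb \(1/L\) into the amplitude and get it from \((1/L)''(H)=D/\Lambda^2\) times \(\Lambda/(2D)\), which checks out. One small correction: the tail majorant should use \(\cosh(h/M)\le\cosh(h/2)\) for \(M\geq2\), not \(\cosh h\), since only the former is integrable against \(1/\sinh h\) at infinity.
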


\begin{proof}
Put
\[
 \phi(h)=\log\frac{L(h)}\Lambda,\quad \alpha=\frac D\Lambda,
\]
and, for \(f(h)=1\) or \(f(h)=h\), set
\[
 J_N(f)=2\int_0^\infty\frac{L(h)^Nf(h)}{\sinh h}\dd h.
\]
At the saddle, \(\phi(H)=\phi'(H)=0\) and
\(\phi''(H)=-\alpha\).  With \(g=f/\sinh h\), Taylor expansion after
\(h=H+y/\sqrt N\) gives the standard two-term Laplace formula
\begin{align}
 J_N(f)={}&2\Lambda^Ng(H)\sqrt{\frac{2\pi}{\alpha N}}
 \left\{1+\frac{c(f)}N+O(N^{-2})\right\},
 \label{eq:Laplace-two-term}\\
 c(f)={}&\frac{g''(H)}{2\alpha g(H)}
 +\frac{\phi'''(H)g'(H)}{2\alpha^2g(H)}
 +\frac{\phi''''(H)}{8\alpha^2}
 +\frac{5\phi'''(H)^2}{24\alpha^3}.
 \label{eq:Laplace-coefficient}
\end{align}
Here
\[
 \phi'''(H)=\frac{L_3}{\Lambda},\quad
 \phi''''(H)=\frac{L_4}{\Lambda}-\frac{3D^2}{\Lambda^2}.
\]
For \(f=1\), formula \eqref{eq:Laplace-coefficient} is \(\kappa_0\); for
\(f=h\) it is \(\kappa_1\).

We record the uniform justification.  Fix a compact set
\(C\subset\Omega\).  Continuity gives positive lower bounds for
\(H,\Lambda,D\) on \(C\), as well as finite upper bounds.  There is a
single \(r>0\) such that the moving intervals
\[
 V_{c,b}=\{h:|h-H(c,b)|\leq r\}
\]
lie in \((0,\infty)\) and \(-\phi''\) is bounded below there by one positive
constant.  On these intervals the derivatives of \(\phi\) through order
eight and those of \(1/\sinh h\) and \(h/\sinh h\) through order six are
bounded uniformly on \(C\).  Taylor's theorem therefore gives one Gaussian
majorant and the relative \(O_C(N^{-2})\) remainder in
\eqref{eq:Laplace-two-term}.

The continuous function \(\Lambda-\log c\) has a positive minimum on
\(C\).  The convergence \(L(h)\to\log c\) is uniform there.  Compactness,
strict maximality, and \(L(h)=O_C(h)\) at zero now give constants
\(R>0\) and \(\eta>0\) such that
\[
 \frac{L(h)}{\Lambda}\leq1-\eta
 \quad\text{for }h\notin V_{c,b},
\]
including \(h\geq R\).  Thus the off-saddle part is exponentially small.
For the diagonal integrals we retain the exact hyperbolic amplitudes on this
complement.  For \(M\geq2\), after retaining one factor of \(L\), they are
bounded by the compact-uniform integrable majorants
\[
 \frac{L(h)\cosh(h/2)}{\sinh h},
 \qquad
 \frac{hL(h)\cosh(h/2)}{\sinh h}.
\]
These estimates prove the asserted existence of \(M_C,C_C\).
After the standard rescaling \(h=H+u/\sqrt N\), the same Gaussian
majorant dominates the rescaled tails uniformly for parameters in \(C\).

Only on the common saddle neighborhood do we use
\[
 \cosh(h/M)=1+O(M^{-2}),\quad
 \sinh(h/M)=\frac hM+O(M^{-3}).
\]
Apply \eqref{eq:Laplace-two-term} to the four parity cases in
\eqref{eq:paired-diagonal}.  For \(A_M\), the large power is \(N=M\); for
\(B_M\), it is \(N=M-1\).  Replacing \(M-1\) by \(M\) supplies the extra
\(1/2\) in \eqref{eq:B-two-term}.  This gives all four formulas.
\end{proof}

\begin{corollary}\label{cor:quotient-expansions}
Along either parity class,
\begin{align}
 \frac{A_{M+2}}{A_M}
 &=\Lambda^2
 \begin{cases}
  1-M^{-1}+O(M^{-2}),&M\ \text{odd},\\
  1-3M^{-1}+O(M^{-2}),&M\ \text{even},
 \end{cases}\label{eq:A-quotient-expansion}\\
 \frac{B_{M+2}}{B_M}
 &=\Lambda^2
 \begin{cases}
  1-3M^{-1}+O(M^{-2}),&M\ \text{odd},\\
  1-M^{-1}+O(M^{-2}),&M\ \text{even}.
 \end{cases}\label{eq:B-quotient-expansion}
\end{align}
In particular, the upper bound \(\Lambda^2\) in
\eqref{eq:quotient-upper} is sharp.
\end{corollary}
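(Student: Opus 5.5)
The plan is to divide the two-term expansions of Theorem~\ref{thm:two-term-asymptotics} for indices \(M+2\) and \(M\). The key point is that \(M\) and \(M+2\) have the same parity, so both expansions come from the same case of \eqref{eq:A-two-term} or \eqref{eq:B-two-term}. Within one parity class the constants \(K\), \(H\), \(\kappa_0\), \(\kappa_1\) are identical and cancel in the quotient; only the power of \(\Lambda\), the power of \(M\), and the correction \(\kappa/M\) change.

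For \(A_M\) with \(M\) odd, \eqref{eq:A-two-term} gives
\[
 \frac{A_{M+2}}{A_M}=\Lambda^2\Bigl(\frac{M+2}{M}\Bigr)^{-1/2}
 \frac{1+\kappa_0/(M+2)+O(M^{-2})}{1+\kappa_0/M+O(M^{-2})}.
\]
We have \(\bigl((M+2)/M\bigr)^{-1/2}=1-M^{-1}+O(M^{-2})\). Also
\(\kappa_0/(M+2)-\kappa_0/M=O(M^{-2})\), so the correction ratio is \(1+O(M^{-2})\). Together these give \(\Lambda^2\{1-M^{-1}+O(M^{-2})\}\).

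For \(M\) even, the power is \(M^{-3/2}\), and \(\bigl((M+2)/M\bigr)^{-3/2}=1-3M^{-1}+O(M^{-2})\). The same argument then gives \(1-3M^{-1}\).

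The \(B\) cases are identical with \(\kappa_1+1/2\) or \(\kappa_0+1/2\) in place of the correction constant. Here the exponent is \(-3/2\) for odd \(M\) and \(-1/2\) for even \(M\). The factor \(\Lambda^{M+1}/\Lambda^{M-1}=\Lambda^2\) again, which yields \eqref{eq:B-quotient-expansion}.

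The only point needing care is that the relative remainders in Theorem~\ref{thm:two-term-asymptotics} are genuinely \(O(M^{-2})\). Then the quotient of \(1+O(M^{-2})\)-type factors stays \(1+O(M^{-2})\) once \(M\) is large enough that the denominator is bounded away from zero. The \(\kappa\)'s do not survive at order \(M^{-1}\), which is why the answer is universal.

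For sharpness, we combine the expansions with \eqref{eq:quotient-upper}. The quotients are strictly below \(\Lambda^2\), yet they equal \(\Lambda^2(1-O(M^{-1}))\to\Lambda^2\). Hence no constant smaller than \(\Lambda^2\) can bound them, so the upper bound in \eqref{eq:quotient-upper} is sharp. This is consistent with the strict monotone increase established in Theorem~\ref{thm:continuous-logconvexity}.
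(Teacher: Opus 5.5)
Your proposal is correct and matches the paper's proof, which simply takes parity-preserving quotients of the expansions in Theorem~\ref{thm:two-term-asymptotics}. Your explicit expansions of \((1+2/M)^{-1/2}\) and \((1+2/M)^{-3/2}\), the observation that the \(\kappa\)-corrections cancel to \(O(M^{-2})\), and the sharpness argument via \eqref{eq:quotient-upper} fill in exactly the details the paper leaves implicit.
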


\begin{proof}
Take parity-preserving quotients in
\eqref{eq:A-two-term}--\eqref{eq:B-two-term}.
\end{proof}

\begin{corollary}\label{cor:pi-general}
Define
\begin{equation}\label{eq:P-normalized}
 \mathcal P_M(c,b)=
 \frac{D\sinh^2H\,M^2A_MB_M}{8H\Lambda^{2M}}.
\end{equation}
Then, locally uniformly in \(\Omega\),
\begin{equation}\label{eq:P-two-term}
 \mathcal P_M(c,b)=\pi\left\{1+\frac{\gamma(c,b)}M
 +O(M^{-2})\right\},
 \quad \gamma(c,b)=\kappa_0+\kappa_1+\frac12.
\end{equation}
For every compact \(C\subset\Omega\), there exist constants \(M_C,C_C>0\)
such
that
\begin{align}
 |\mathcal P_M(c,b)-\pi|&\leq\frac{C_C}{M},\label{eq:P-bound-one}\\
 \left|\mathcal P_M(c,b)-\pi\left(1+\frac{\gamma(c,b)}M\right)\right|
 &\leq\frac{C_C}{M^2}\label{eq:P-bound-two}
\end{align}
for \((c,b)\in C\) and \(M\geq M_C\).  Hence
\begin{equation}\label{eq:P-corrected}
 \frac{\mathcal P_M(c,b)}{1+\gamma(c,b)/M}
 =\pi+O_C(M^{-2}).
\end{equation}
\end{corollary}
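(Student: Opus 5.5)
The plan is to multiply the two-term expansions of Theorem~\ref{thm:two-term-asymptotics} and check that the constants collapse to \(\pi\). The product \(A_MB_M\) always pairs one factor with the \(M^{-1/2}\) law and one with the \(M^{-3/2}\) law, whatever the parity of \(M\).

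For \(M\) odd, \(A_M\) uses the \(\kappa_0\) form and \(B_M\) the \(\kappa_1+1/2\) form. This gives
\[
 A_MB_M=K^2H\Lambda^{2M-1}M^{-2}
 \Bigl\{1+\frac{\kappa_0}M+O(M^{-2})\Bigr\}
 \Bigl\{1+\frac{\kappa_1+1/2}M+O(M^{-2})\Bigr\}.
\]
For \(M\) even, \(A_M\) carries \(KH\Lambda^MM^{-3/2}\{1+\kappa_1/M\}\) and \(B_M\) carries \(K\Lambda^{M-1}M^{-1/2}\{1+(\kappa_0+1/2)/M\}\). The prefactor is the same, \(K^2H\Lambda^{2M-1}M^{-2}\). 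In both cases the first-order coefficient is \(\kappa_0+\kappa_1+1/2=\gamma\), so the parity distinction disappears.

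Next I insert \(K^2=8\pi\Lambda/(D\sinh^2H)\) from \eqref{eq:K-general}. This gives
\[
 \frac{D\sinh^2H\,M^2}{8H\Lambda^{2M}}\cdot K^2H\Lambda^{2M-1}M^{-2}=\pi,
\]
and \eqref{eq:P-two-term} follows.

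The only real point is uniformity. Theorem~\ref{thm:two-term-asymptotics} supplies, on each compact \(C\subset\Omega\), constants \(M_C\) and \(C_C\) bounding the relative remainders by \(C_C/M^2\). I also need \(\kappa_0\) and \(\kappa_1\), hence \(\gamma\), bounded on \(C\). This follows from continuity of \(H,\Lambda,D,L_3,L_4\), together with the positive lower bounds on \(H\), \(D\) and \(\sinh H\) on \(C\) (note \(D>0\) by \eqref{eq:D-saddle}).

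With these bounds, expanding the product of the two brackets leaves \(\gamma/M\) plus a remainder. That remainder consists of \(\kappa_0(\kappa_1+1/2)/M^2\) and the cross terms with the \(O(M^{-2})\) errors, all bounded by a constant times \(M^{-2}\) uniformly on \(C\). This proves \eqref{eq:P-bound-two} after enlarging \(C_C\). Then \eqref{eq:P-bound-one} follows from the triangle inequality with \(\pi|\gamma|/M\le C_C/M\).

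Finally, for \(M\ge M_C\) large enough that \(|\gamma|/M\le 1/2\) on \(C\), divide \eqref{eq:P-bound-two} by \(1+\gamma/M\), which lies in \([1/2,3/2]\). This gives \eqref{eq:P-corrected} with constant at most \(2C_C\). The main obstacle is none beyond bookkeeping; the step to handle with care is the even case, confirming that the roles of \(\kappa_0\) and \(\kappa_1\) simply swap and leave \(\gamma\) unchanged.
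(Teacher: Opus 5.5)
Your proposal is correct and matches the paper's proof: multiply the parity-matched expansions from Theorem~\ref{thm:two-term-asymptotics} to obtain $A_MB_M=K^2H\Lambda^{2M-1}M^{-2}\{1+\gamma/M+O(M^{-2})\}$ in both parity cases, then insert $K^2=8\pi\Lambda/(D\sinh^2H)$. Your uniformity bookkeeping (boundedness of $\kappa_0,\kappa_1$ on $C$, the triangle inequality, and $|1+\gamma/M|\ge 1/2$ before dividing) makes explicit what the paper states in one line.
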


\begin{proof}
In both parity cases, Theorem~\ref{thm:two-term-asymptotics} gives
\[
 A_MB_M=K^2H\Lambda^{2M-1}M^{-2}
 \left\{1+\frac{\kappa_0+\kappa_1+1/2}{M}+O(M^{-2})\right\}.
\]
Since \(K^2=8\pi\Lambda/(D\sinh^2H)\), this is
\eqref{eq:P-two-term}.  The compact-uniform bounds follow from the
corresponding bounds in Theorem~\ref{thm:two-term-asymptotics}; division by
\(1+\gamma/M\) proves \eqref{eq:P-corrected}.
\end{proof}

On the fixed-critical locus \(b=-2\), the constants simplify.  Put
\begin{equation}\label{eq:S-special}
 S=\sqrt{(c-1)(c+3)},\quad
 H=\operatorname{arcosh}\frac{c+1}{2}.
\end{equation}
Then \(\Lambda=H\), \(D=\csch H\), and
\begin{align}
 \kappa_{0,c}&=\frac{H(3c+7)}{8S}-\frac38,
 \label{eq:kappa-zero-c}\\
 \kappa_{1,c}&=\kappa_{0,c}+\frac{c+1}{4},
 \label{eq:kappa-one-c}\\
 \gamma_c&=\frac14\left\{c+\frac{H(3c+7)}S\right\}.
 \label{eq:gammac}
\end{align}
In particular, for the original kernel \(c=5\),
\begin{equation}\label{eq:gamma5}
 H=\log(3+2\sqrt2),\quad
 \gamma_5=\frac54+\frac{11H}{8\sqrt2}.
\end{equation}
Therefore
\begin{align}
 &\frac{M^2\Psi(1/M,M)\Psi(1/M,M-1)}
 {2\sqrt2\,H^{2M+1}}
 =\pi\left\{1+\frac1M
 \left(\frac54+\frac{11H}{8\sqrt2}\right)+O(M^{-2})\right\},
 \label{eq:pi-original-two-term}\\
 &\frac{M^2\Psi(1/M,M)\Psi(1/M,M-1)}
 {2\sqrt2\,H^{2M+1}
 \left\{1+M^{-1}(5/4+11H/(8\sqrt2))\right\}}
 =\pi+O(M^{-2}).
 \label{eq:pi-original-corrected}
\end{align}
The occurrence of \(\pi\) is the Gaussian normalization in Laplace's
method.  Formula \eqref{eq:pi-original-corrected} is an asymptotic identity,
not a modular or exponentially convergent series.

\section{Conclusion}

Reciprocal quadratic Mellin integrals have two complementary exact
structures.  Partial fractions turn integer deformation samples into
algebraic functions, while hyperbolic pairing turns logarithmic derivatives
into positive compact moments.  Theorem~\ref{thm:general-sample-asymptotic}
shows that their common extremal point controls both the dominant sampling
singularity and the high-moment asymptotics throughout \(\Omega\).

The locus \(b=-2\) is distinguished twice: its two critical points are
fixed, and its maximum has equal height and location.  This coincidence
explains the simple normalization in
\eqref{eq:pi-original-corrected}.  The quartic half-weight cover, strict
total positivity, and two-term diagonal formulas give concrete algebraic,
order-theoretic, and asymptotic consequences of the same kernel.

Appendix~\ref{sec:hyperlog} provides only ambient, nonminimal
iterated-integral data; no intrinsic weight, depth, or modular
parametrization is claimed.

\appendix

\section{Fixed Mellin regularization}\label{app:regularization}

\begin{proposition}\label{prop:regularization}
Let
\[
 f_s(t)=\frac{2R_{c,b}(t)^s}{t^2-1}.
\]
At zero and infinity write
\[
 f_s(t)=\sum_{m\geq0}\alpha_m(s)t^m,\quad
 f_s(t)=\sum_{m\geq0}\beta_m(s)t^{-m-2},
\]
respectively.  First suppose that
\(a\notin\mathbb Z\setminus\{0\}\), and choose a nonnegative integer \(N\)
such that
\begin{equation}\label{eq:N-regularization}
 N>\max\{\operatorname{Re}a-1,-\operatorname{Re}a-1\}.
\end{equation}
Every sum over \(0\leq m<N\) is understood to be empty when \(N=0\).
Define the zero, principal-value, and infinity contributions by
\begin{align}
 \mathcal M_{0,s}(a)={}&
 \int_0^{1/2}t^a\left(f_s(t)-\sum_{m=0}^{N-1}
 \alpha_m(s)t^m\right)\dd t
 +\sum_{m=0}^{N-1}
 \frac{\alpha_m(s)2^{-a-m-1}}{a+m+1},
 \label{eq:regularization-zero}\\
 \mathcal M_{1,s}(a)={}&
 \lim_{\varepsilon\downarrow0}
 \left(\int_{1/2}^{1-\varepsilon}+\int_{1+\varepsilon}^{2}\right)
 t^af_s(t)\dd t,
 \label{eq:regularization-one}\\
 \mathcal M_{\infty,s}(a)={}&
 \int_2^\infty t^a\left(f_s(t)-
 \sum_{m=0}^{N-1}\beta_m(s)t^{-m-2}\right)\dd t
 -\sum_{m=0}^{N-1}
 \frac{\beta_m(s)2^{a-m-1}}{a-m-1}.
 \label{eq:regularization-infinity}
\end{align}
Then
\begin{equation}\label{eq:fixed-regularization}
 \mathcal M_s(a)=\mathcal M_{0,s}(a)+\mathcal M_{1,s}(a)
 +\mathcal M_{\infty,s}(a).
\end{equation}
This definition is independent of \(N\), is meromorphic in \(a\), and
equals \(\Phi_{c,b}(a,s)\).  The deletion at \(t=1\) is symmetric in the
fixed local coordinate \(t-1\).  If \(p\) is a polynomial and \(E\) denotes
the shift \((EF)(a)=F(a+1)\), the same prescription gives
\begin{equation}\label{eq:regularized-shift}
 \mathcal M\{p(t)f_s(t)\}(a)=p(E)\mathcal M_s(a).
\end{equation}
At a candidate pole \(a\in\mathbb Z\setminus\{0\}\), these formulas define
the meromorphic germ obtained by continuation.  Its residue and, if desired,
the constant Laurent coefficient as a Hadamard finite part must be
distinguished from an ordinary function value.
\end{proposition}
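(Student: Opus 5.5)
The proof has three parts: a well-definedness check of the three pieces, then the $N$-independence and meromorphy, and finally the identification with $\Phi_{c,b}$ together with the shift law.

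\emph{Well-definedness and independence of $N$.} On $[2,\infty)$ the subtracted integrand is $O(t^{\operatorname{Re}a-N-2})$, which is integrable exactly when $N>\operatorname{Re}a-1$. On $(0,1/2]$ it is $O(t^{\operatorname{Re}a+N})$, integrable when $N>-\operatorname{Re}a-1$; this is \eqref{eq:N-regularization}. The piece $\mathcal M_{1,s}$ is an ordinary symmetric principal value. Indeed, $f_s$ has a simple pole at $t=1$ with residue $R(1)^s=1$, so $t^af_s(t)-(t-1)^{-1}$ is integrable near $1$, and the symmetric excision of $(t-1)^{-1}$ has limit zero. For independence of $N$, pass from $N$ to $N+1$. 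If $\operatorname{Re}a+N+1>0$, then
\[
\int_0^{1/2}\alpha_N t^{a+N}\dd t=\alpha_N\frac{2^{-a-N-1}}{a+N+1},
\]
so the change in the integral is exactly cancelled by the new boundary term. The same holds at infinity, with the sign produced by $\int_2^\infty t^{a-N-2}\dd t=-2^{a-N-1}/(a-N-1)$. When $N$ lies below the convergence range, the two expressions agree by analytic continuation in $a$.

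\emph{Meromorphy.} Each subtracted integral is holomorphic in $a$ on its half-plane of convergence. The boundary terms are rational in $a$ with simple poles only at $a=-m-1$ and $a=m+1$. Since $N$ can be taken arbitrarily large, $\mathcal M_s$ is meromorphic on $\mathbb C$ with poles only in $\mathbb Z\setminus\{0\}$. This repeats the continuation argument of Proposition~\ref{prop:analytic}.

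\emph{Identification with $\Phi_{c,b}$.} In the fundamental strip take $N=0$. Then $\mathcal M_s(a)=2\PV\int_0^\infty t^aR^s/(t^2-1)\dd t$. Splitting off $R^s-1$ gives $G_{c,b}(a,s)$ plus the base principal value \eqref{eq:base-PV}, which is $\Phi_{c,b}$ by \eqref{eq:Phi}. Both sides are meromorphic in $a$, so they agree everywhere.

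\emph{Shift law.} By linearity it suffices to treat $p(t)=t$. The function $tf_s$ has the expansions $\sum\alpha_m t^{m+1}$ at zero and $\sum\beta_m t^{-m-1}$ at infinity. Its prescription at $a$ must be compared with that of $f_s$ at $a+1$, since $t^a\cdot tf_s=t^{a+1}f_s$. The pieces $\mathcal M_0$ and $\mathcal M_1$ match after reindexing: at zero the coefficient of $t^{m}$ is $\alpha_{m-1}$, and the boundary terms $2^{-(a+1)-m-1}/(a+1+m+1)$ coincide. The PV at $t=1$ transforms trivially, because the excision is in the same coordinate $t-1$ for both.

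The main obstacle is at infinity. There $tf_s$ has a leading term $\beta_0 t^{-1}$, which is not of the form $t^{-m-2}$, so the infinity prescription applied literally to $tf_s$ differs in indexing. I would resolve this by verifying the identity in a strip where both sides converge with a common admissible $N$ (for instance $-2<\operatorname{Re}a<0$, after shifting), where the subtraction bookkeeping is exact. Analytic continuation and $N$-independence then finish the argument. The final remarks about residues and Hadamard finite parts at integer $a$ are definitional and need no proof.
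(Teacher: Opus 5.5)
Your proposal is correct and follows essentially the same route as the paper. The middle principal value exists because the residue is $R(1)^s=1$, $N$-independence comes from the one-term cancellation, meromorphy comes from gluing the strips $|\operatorname{Re}a|<N+1$, and the identification with $\Phi_{c,b}$ follows from the identity theorem applied from the fundamental strip.

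The only difference is in the shift law. The paper notes that the integrand and all counterterms for $t^jf_s$ are those of $f_s$ with $a$ replaced by $a+j$, so your indexing issue at infinity never arises. Your continuation from a strip where both sides are plain principal values also works, provided you handle $t^j$ directly (or apply the $p=t$ case to $t^{j-1}f_s$) rather than only $p=t$.
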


\begin{proof}
The pole of \(f_s\) at \(t=1\) has residue one, independent of \(s\).
Hence the symmetric middle limit exists.  The endpoint subtractions in
\eqref{eq:regularization-zero} and \eqref{eq:regularization-infinity}
remove the exact Taylor terms responsible for the endpoint divergences.
The displayed rational terms are their
integrals continued meromorphically in \(a\).  Increasing \(N\) by one adds
one subtracted monomial and the negative of its continued integral, so the
value does not change.  On \(-1<\operatorname{Re}a<1\), the prescription is
the principal value \eqref{eq:Phi-PV}; the identity theorem then identifies
it with the continuation of \(\Phi\).  Changing the fixed split points gives
the same function for the same reason.

For a monomial \(p(t)=t^j\), both the integrands and all endpoint
counterterms prescribed in \eqref{eq:regularization-zero} and
\eqref{eq:regularization-infinity} are obtained by replacing \(a\) with
\(a+j\).  The middle symmetric deletion in
\eqref{eq:regularization-one} is unchanged.
Multiplication by a general polynomial \(p(t)\) changes the residue at
\(t=1\) from one to \(p(1)\), exactly as does the linear combination
\(p(E)\) of the shifted prescriptions.
Thus \(\mathcal M\{t^jf_s\}(a)=\mathcal M_s(a+j)\).  Linearity proves
\eqref{eq:regularized-shift}.
\end{proof}

\section{Rational weights and iterated integrals}\label{sec:hyperlog}

Let \(\mu_N\) denote the set of \(N\)th roots of unity.
For the real segment from \(1\) to \(u\in(0,1)\), write
\begin{align}
 I(;1\to u)&=1,\notag\\
 I(\alpha_1,\ldots,\alpha_r;1\to u)
 &=\int_1^u
 \frac{I(\alpha_2,\ldots,\alpha_r;1\to v)}
 {v-\alpha_1}\dd v.
 \label{eq:iterated-integral-convention}
\end{align}
At \(1\) we use the tangential base point with tangent vector \(-1\),
directed toward zero.  The alphabets below do not contain zero, so the
endpoint \(u=0\) is ordinary and convergent; shuffle regularization is used
only for words singular at the initial tangential base point.  This is the standard
convention \cite[Section~5]{Brown2009}.

\begin{theorem}\label{thm:rational-hyperlog}
Let \(c>1\) and \(b>-2\sqrt c\) be real algebraic numbers.  Let
\(M\geq2\), \(p\in\mathbb Z\), \(|p|<M\), and \(n\geq1\).  Define
\begin{equation}\label{eq:PQ-hyperlog}
 P(T)=cT^2+bT+1,\quad Q(T)=T^2+bT+c
\end{equation}
and the finite ambient alphabet
\begin{equation}\label{eq:alphabet}
 \mathcal A=\mu_{2M}\cup
 \{\alpha\in\Qbar:P(\alpha^M)Q(\alpha^M)=0\}.
\end{equation}
Let \(\mathbb K=\mathbb Q(b,c,\mathcal A)\), a finite conjugation-stable
number field.  Then \(\Psi_{c,b}(p/M,n)\) is a \(\mathbb K\)-linear
combination of shuffle-regularized Chen iterated integrals of length at most
\(n+1\) on
\[
 \mathbb P^1\setminus(\mathcal A\cup\{\infty\}).
\]
The path is the real segment from the tangential base point at \(1\),
directed toward zero, to the ordinary endpoint zero.  The representation is
invariant under complex conjugation and hence is real.  The set
\(\mathcal A\) records the underlying letters; roots enter the logarithmic
factorization below with their multiplicities.
\end{theorem}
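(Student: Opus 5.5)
Starting from the definition \eqref{eq:Psi}, I will rationalize the weight. With \(a=p/M\), substitute \(t=T^{2M}\)? It is cleaner to use \(t=T^M\): then \(t^a=T^p\), \(\dd t=MT^{M-1}\dd T\), and \(t^2-1=T^{2M}-1\), whose roots are exactly \(\mu_{2M}\). This gives
\[
 \Psi_{c,b}(p/M,n)=2M\int_0^\infty\frac{T^{p+M-1}}{T^{2M}-1}
 \log^n\frac{P(T^M)}{Q(T^M)}\dd T .
\]
Next I use reciprocity \eqref{eq:reciprocity-R} together with \(T\mapsto1/T\) to fold \((1,\infty)\) onto \((0,1)\), as in Proposition~\ref{prop:analytic}. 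This writes the integral over \([0,1]\) with a rational weight \(\omega(T)\dd T\), where \(\omega(T)=\{T^{p+M-1}+(-1)^{n+1}T^{M-1-p}\}/(T^{2M}-1)\). Since \(|p|<M\), both exponents are nonnegative. Partial fractions then express \(\omega\) as a \(\mathbb Q(\mu_{2M})\)-combination of \(\dd T/(T-\zeta)\) with \(\zeta\in\mu_{2M}\). No polynomial part survives, because the numerator degree is below \(2M\); and no pole occurs at \(0\).

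For the logarithm I factor \(P(T^M)=c\prod_{\beta}(T-\beta)\) and \(Q(T^M)=\prod_{\beta'}(T-\beta')\) over the roots in \(\mathcal A\), counted with multiplicity. Writing \(\log\{P(T^M)/Q(T^M)\}=\int_1^T d\log(P/Q)(v)\) fixes the constant, because \(P(1)=Q(1)\) gives value \(0\) at \(T=1\). This exhibits the logarithm as a \(\mathbb Z\)-combination of length-one iterated integrals \(I(\beta;1\to T)\) along the segment. The branch on \((0,1)\) is real and continuous, since both quadratics are positive there. The \(n\)th power becomes a sum of length-\(n\) words by the shuffle product, which holds for iterated integrals from a common base point. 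The outer integration \(\int_0^1\omega(u)\,(\cdots)\dd u=-\int_1^0\) then prepends one letter from \(\mu_{2M}\), giving words of length \(n+1\) on the path from \(1\) to \(0\).

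The main obstacle is the base point at \(1\). The letter \(1\in\mu_{2M}\) appears in \(\omega\), but the integrand vanishes to order \(n\geq1\) at \(T=1\): \(\log^n\) has a zero of order \(n\), and the folded weight has at worst a simple pole. So the total integral converges, yet the individual words beginning with letter \(1\) diverge at the initial point. I will handle this by the tangential-base-point shuffle regularization of \cite[Section~5]{Brown2009}, with tangent \(-1\). The inner letters are in \(\mathcal A\setminus\{1\}\) whenever \(P(1)\ne0\), which holds since \(P(1)=c+b+1>0\). Only the outermost letter \(1\) is then singular, and its regularized value agrees with the convergent combination, because the divergent logarithmic terms cancel against the zero of the \(\log^n\) factor.

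Finally, coefficients lie in \(\mathbb K\) by construction. Complex conjugation permutes \(\mathcal A\), since \(b,c\) are real. Averaging the representation with its conjugate, which is legitimate because the path is real and the value is real, gives the conjugation-invariant form.
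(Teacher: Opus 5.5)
Your argument is correct and follows the paper's proof essentially step for step: the substitution \(t=u^M\), folding by reciprocity, partial fractions over \(\mu_{2M}\), the logarithmic factorization normalized at \(1\) via \(P(1)=Q(1)\), the shuffle expansion with one prepended outer letter, and conjugation pairing. One small correction: words with outer letter \(1\) do not actually diverge, because each inner word \(I(\alpha_1,\ldots,\alpha_n;1\to v)\) with letters off \([0,1]\) is already \(O((1-v)^n)\), so every word converges individually and the tangential regularization is vacuous; this is exactly how the paper concludes.
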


\begin{proof}
Put \(t=u^M\) in \eqref{eq:Psi}, split the integral at one, and invert the
upper part.  Reciprocity gives the convergent formula
\begin{equation}\label{eq:hyperlog-integral}
 \Psi_{c,b}(p/M,n)=2M\int_0^1
 \frac{(-1)^nu^{M-p-1}-u^{M+p-1}}{1-u^{2M}}
 \log^nR_{c,b}(u^M)\dd u.
\end{equation}
Both exponents in the numerator are nonnegative.  Positivity of \(P,Q\) on
the positive axis shows that none of their root letters lies in \((0,1)\).
The only letter on the closed path is the initial root of unity \(1\).

For a polynomial-root letter \(\alpha\), define
\begin{equation}\label{eq:J-alpha}
 J_\alpha(u)=\int_1^u\frac{\dd v}{v-\alpha}
\end{equation}
by continuation along the real segment.  Factorization normalized at
\(u=1\) gives
\begin{equation}\label{eq:real-log-factorization}
 \log R_{c,b}(u^M)=
 \sum_{P(\alpha^M)=0}J_\alpha(u)
 -\sum_{Q(\beta^M)=0}J_\beta(u).
\end{equation}
Both sums in \eqref{eq:real-log-factorization} count roots with
multiplicity.
The derivatives of both sides agree, and both sides vanish at one because
\(P(1)=Q(1)\).  Hence \eqref{eq:real-log-factorization} selects the original
real logarithm and leaves no unspecified continuation branch.

The outer rational form in \eqref{eq:hyperlog-integral} is proper and has
only simple poles in \(\mu_{2M}\).  More precisely,
\begin{align}
 \omega_{p,n}(u)
 &=
 2M\frac{(-1)^nu^{M-p-1}-u^{M+p-1}}{1-u^{2M}}\dd u\notag\\
 &=\sum_{\zeta\in\mu_{2M}}\lambda_\zeta
 \frac{\dd u}{u-\zeta},\qquad
 \lambda_\zeta=\zeta^{M+p}-(-1)^n\zeta^{M-p}.
 \label{eq:outer-partial-fractions}
\end{align}
Let \(\mathcal B\) be the signed multiset of roots in
\eqref{eq:real-log-factorization}, with
\(\epsilon_\alpha=1\) for a numerator root and
\(\epsilon_\alpha=-1\) for a denominator root.  The shuffle identity gives
the checkable expansion
\begin{align}
 \Psi_{c,b}(p/M,n)
 ={}&-n!\sum_{\zeta\in\mu_{2M}}\lambda_\zeta
 \sum_{\alpha_1,\ldots,\alpha_n\in\mathcal B}
 \epsilon_{\alpha_1}\cdots\epsilon_{\alpha_n}\notag\\
 &\hspace{24mm}\times
 I(\zeta,\alpha_1,\ldots,\alpha_n;1\to0).
 \label{eq:shuffle-expansion}
\end{align}
The minus sign reverses the outer path from \(0\to1\) in
\eqref{eq:hyperlog-integral} to \(1\to0\).  Thus the \(n\) logarithmic
letters acquire one outer letter
\cite{Chen1977,Goncharov1998,Brown2009,Panzer2015}.

Near \(u=1\), each logarithmic word is \(O((1-u)^n)\), whereas the outer
form has at most a simple pole.  The product is \(O((1-u)^{n-1})\).  At
zero, the outer form and all logarithms are bounded.  Tangential
regularization therefore agrees with the convergent linear combination
\eqref{eq:hyperlog-integral} and fixes the shuffle convention.  Finally,
\[
 \overline{I(\alpha_1,\ldots,\alpha_r;1\to0)}
 =I(\overline\alpha_1,\ldots,\overline\alpha_r;1\to0)
\]
because conjugation preserves the alphabet, path, and real tangent.
Conjugate coefficients and letters pair, proving reality.
\end{proof}

\begin{proposition}\label{prop:reduced-alphabet}
In Theorem~\ref{thm:rational-hyperlog}, the coefficient of the outer letter
\(\zeta\in\mu_{2M}\) is
\begin{equation}\label{eq:lambda-zeta}
 \lambda_\zeta=\zeta^{M+p}-(-1)^n\zeta^{M-p}
 =\zeta^{M-p}\{\zeta^{2p}-(-1)^n\}.
\end{equation}
Thus every root-of-unity letter with \(\zeta^{2p}=(-1)^n\) may be deleted.
For the diagonal case \(p=1\), the root-of-unity parts reduce as follows;
the polynomial-root letters from \eqref{eq:alphabet} must still be added.
\begin{center}
\begin{tabular}{ccl}
\toprule
\(M\)&\(A_M=\Psi(1/M,M)\)&\(B_M=\Psi(1/M,M-1)\)\\
\midrule
2&\(\{\pm i\}\)&\(\{\pm1\}\)\\
3&\(\mu_6\)&\(\mu_6\setminus\{\pm1\}\)\\
4&\(\mu_8\setminus\{\pm1\}\)&\(\mu_8\setminus\{\pm i\}\)\\
\bottomrule
\end{tabular}
\end{center}
The displayed construction gives length at most \(M+1\) for \(A_M\) and
at most \(M\) for \(B_M\).
\end{proposition}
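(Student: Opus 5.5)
The plan is to read everything off the partial-fraction expansion \eqref{eq:outer-partial-fractions} in the proof of Theorem~\ref{thm:rational-hyperlog}, and then do a finite check for \(p=1\) and \(M=2,3,4\).

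\emph{Step 1: the coefficient \(\lambda_\zeta\).} The denominator \(1-u^{2M}\) has simple zeros exactly at \(\zeta\in\mu_{2M}\). Its derivative there is
\[
-2M\zeta^{2M-1}=-2M\zeta^{-1}.
\]
The numerator exponents satisfy \(M\pm p-1\le 2M-2\) because \(|p|<M\). Hence the form is proper, and its residue at \(\zeta\) is
\[
 \frac{2M\{(-1)^n\zeta^{M-p-1}-\zeta^{M+p-1}\}}{-2M\zeta^{-1}}
 =\zeta^{M+p}-(-1)^n\zeta^{M-p}.
\]
Pulling out \(\zeta^{M-p}\) gives the factored form \(\zeta^{M-p}\{\zeta^{2p}-(-1)^n\}\) in \eqref{eq:lambda-zeta}.

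\emph{Step 2: deleting letters.} If \(\zeta^{2p}=(-1)^n\), then \(\lambda_\zeta=0\). Every word in \eqref{eq:shuffle-expansion} whose outer letter is \(\zeta\) then carries coefficient zero. Such a \(\zeta\) also never occurs among the inner letters, which come only from \(\mathcal B\). So the letter can be removed from the alphabet without changing the representation. The other roots of unity stay, since their coefficients are nonzero.

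\emph{Step 3: the table for \(p=1\).} Here the deletion condition reads \(\zeta^2=(-1)^n\), with \(n=M\) for \(A_M\) and \(n=M-1\) for \(B_M\).
\begin{itemize}
 \item \(n\) even: the condition is \(\zeta^2=1\), so \(\pm1\) is removed.
 \item \(n\) odd: the condition is \(\zeta^2=-1\), so \(\pm i\) is removed, provided \(\pm i\in\mu_{2M}\), that is, provided \(M\) is even.
\end{itemize}
For \(M=2\), this gives \(\mu_4\setminus\{\pm1\}=\{\pm i\}\) for \(A_2\) and \(\mu_4\setminus\{\pm i\}=\{\pm1\}\) for \(B_2\). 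For \(M=3\), \(A_3\) has \(n\) odd and \(\pm i\notin\mu_6\), so all of \(\mu_6\) remains; \(B_3\) has \(n=2\), so \(\pm1\) is removed. For \(M=4\), \(A_4\) loses \(\pm1\) and \(B_4\) has \(n=3\), so it loses \(\pm i\). This reproduces the table. The polynomial-root letters in \(\mathcal B\) are unaffected throughout.

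\emph{Step 4: lengths.} By Theorem~\ref{thm:rational-hyperlog}, the words have length at most \(n+1\): one outer letter followed by \(n\) logarithmic letters. This gives length at most \(M+1\) for \(A_M\) and at most \(M\) for \(B_M\).

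There is no real obstacle here. The only point needing care is the sign and normalization of the residue computation against \eqref{eq:outer-partial-fractions}. I would double-check it at \(M=2\), \(p=1\), \(n=1\), where \(\lambda_\zeta=\zeta^3+\zeta\) vanishes exactly at \(\pm i\).
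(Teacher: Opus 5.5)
Your proposal is correct and is essentially the paper's argument: the residue of the outer form at each simple pole \(\zeta\in\mu_{2M}\) via the derivative \(-2M\zeta^{2M-1}=-2M\zeta^{-1}\), then solving \(\zeta^2=(-1)^n\) in \(\mu_{2M}\) for the table, and reading the length bounds off Theorem~\ref{thm:rational-hyperlog} with \(n=M\) and \(n=M-1\). One small caveat is that your side remark that a deleted \(\zeta\) never occurs among the inner letters fails in the admissible degenerate case \(b=c+1\). There the letters with \(\zeta^M=-1\) enter \(\mathcal B\) once from \(P\) and once from \(Q\) with opposite signs and cancel, so your conclusion survives; in any case the statement only concerns deleting \(\zeta\) as an outer letter.
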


\begin{proof}
The denominator \(1-u^{2M}\) has derivative \(-2Mu^{2M-1}\).  Taking the
residue of the outer form in \eqref{eq:hyperlog-integral} at
\(u=\zeta\), and using \(\zeta^{2M}=1\), gives
\eqref{eq:lambda-zeta}.  The table follows by solving
\(\zeta^2=(-1)^n\) in \(\mu_{2M}\).  The length bounds follow from
Theorem~\ref{thm:rational-hyperlog} with \(n=M\) and \(n=M-1\).
\end{proof}

The length in this theorem is an upper bound in the displayed alphabet; it
is not an intrinsic purity, minimal weight, or minimal depth statement.  For
\(M=3,4\), the present method does not prove a smaller iterated-integral
basis.

\section{A representative logarithmic moment}\label{app:low-values}

All logarithms, square roots, and polylogarithms in this appendix use their
principal branches.  We use the standard dilogarithm identities recorded in
\cite{Lewin1981,BorweinBradleyBroadhurstLisonek2001}.
The value below is a logarithmic moment
\(\partial_s^2G(1/2,s)|_{s=0}\), not the integer sample
\(G(1/2,1)\) in Corollary~\ref{cor:first-sample}.

\begin{proposition}\label{prop:low-values}
Define
\begin{align}
 q&=\frac{1+i-\sqrt{-4+2i}}2,\notag\\
 u&=|q|^2=\frac{1+\sqrt5-\sqrt{2+2\sqrt5}}2,\notag\\
 z&=q^2=-1+i+\sqrt{-1-2i}.
 \label{eq:q-u-z}
\end{align}
Then \(|z|=u<1\),
\begin{equation}
 \Psi(1/2,2)=16\pi\{\chi_2(u)+\operatorname{Re}\chi_2(z)\},
 \label{eq:Psi-half-two}
\end{equation}
where
\[
 \chi_2(w)=\sum_{j\geq0}\frac{w^{2j+1}}{(2j+1)^2}
 =\frac{\Li_2(w)-\Li_2(-w)}2.
\]
Here \(\chi_2\) is the Legendre chi function of order two.
Equivalently,
\begin{equation}\label{eq:Psi-half-positive}
 \Psi(1/2,2)=32\pi\sum_{j\geq0}
 \frac{\{\operatorname{Re}q^{2j+1}\}^2}{(2j+1)^2}.
\end{equation}
For \(N\in\mathbb Z_{\geq0}\), define
\begin{equation}\label{eq:T-N}
 T_N=32\pi\sum_{0\leq j<N}
 \frac{\{\operatorname{Re}q^{2j+1}\}^2}{(2j+1)^2};
\end{equation}
thus \(T_0=0\).  Then
\begin{equation}\label{eq:Psi-half-tail}
 0\leq\Psi(1/2,2)-T_N
 \leq\frac{32\pi u^{2N+1}}{(2N+1)^2(1-u^2)}.
\end{equation}
\end{proposition}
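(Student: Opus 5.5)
The plan is to reduce \(\Psi(1/2,2)=\Psi_{5,-2}(1/2,2)\) to a Parseval identity for an odd-harmonic Fourier series on \((0,\pi/2)\).

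\emph{Step 1: a flat measure.} Start from the paired formula \eqref{eq:paired-general-a} with \((c,b)=(5,-2)\), \(a=1/2\), \(n=2\) (even case). Since \(\sinh(h/2)/\sinh h=1/(2\cosh(h/2))\), it gives
\[
 \Psi(1/2,2)=\int_0^\infty \frac{L(h)^2}{\cosh(h/2)}\dd h .
\]
Substitute the Gudermannian \(\phi=\operatorname{gd}(h/2)\in(0,\pi/2)\). Then \(\dd h/\cosh(h/2)=2\dd\phi\), and \(x=\tanh(h/2)=\sin\phi\) is the original variable of \eqref{eq:original}. So \(\Psi(1/2,2)=2\int_0^{\pi/2}L^2\dd\phi\). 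Here \(L\) is the kernel of Lemma~\ref{lem:convergence}, and factoring \(2x^2\pm2x+1=|1\pm(1+i)x|^2\) gives
\[
 L=4\operatorname{Re}\operatorname{artanh}\bigl((1+i)\sin\phi\bigr).
\]

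\emph{Step 2: the Fourier expansion.} The key identity to establish is
\[
 \operatorname{artanh}\frac{2w\sin\phi}{1+w^2}
 =2\sum_{j\ge0}(-1)^j\frac{w^{2j+1}\sin((2j+1)\phi)}{2j+1},\qquad |w|<1 .
\]
It follows by writing the right side as \(\operatorname{Im}\) / \(\log\) of \((1+iwe^{i\phi})/(1-iwe^{i\phi})\)-type expressions, or by checking that both sides vanish at \(\phi=0\) and have equal \(\phi\)-derivatives. Next choose \(q\) with \(2q/(1+q^2)=1+i\) and \(|q|<1\), namely \(q=(1-\sqrt{1-2i})/(1+i)\). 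I will check that this agrees with the displayed \(q\) in \eqref{eq:q-u-z}. Its modulus squared is \(u=(1+\sqrt5-\sqrt{2+2\sqrt5})/2\approx0.346<1\), and \(z=q^2\) gives \(|z|=u\).

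Then \(L=8\sum_j(-1)^j\operatorname{Re}(q^{2j+1})\sin((2j+1)\phi)/(2j+1)\), which converges absolutely and uniformly. Real parts commute with the sum, and the choice of branch is harmless because both sides are real-analytic in \(\phi\), vanish at \(\phi=0\), and \(|q\,e^{\pm i\phi}|<1\).

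\emph{Step 3: Parseval and the tail.} By orthogonality, \(\int_0^{\pi/2}\sin((2j+1)\phi)\sin((2k+1)\phi)\dd\phi=\tfrac{\pi}{4}\delta_{jk}\). Hence
\[
 \Psi(1/2,2)=2\cdot64\cdot\tfrac{\pi}{4}\sum_{j\ge0}\frac{(\operatorname{Re}q^{2j+1})^2}{(2j+1)^2},
\]
which is \eqref{eq:Psi-half-positive}. The identity \((\operatorname{Re}q^{m})^2=\tfrac12\bigl(|q|^{2m}+\operatorname{Re}q^{2m}\bigr)\) with \(m=2j+1\) turns this into \(16\pi\{\chi_2(u)+\operatorname{Re}\chi_2(z)\}\), which is \eqref{eq:Psi-half-two}. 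For \eqref{eq:Psi-half-tail}, the summands are nonnegative, so \(\Psi(1/2,2)-T_N\ge0\). On the other side, \((\operatorname{Re}q^{2j+1})^2\le u^{2j+1}\) and \((2j+1)^{-2}\le(2N+1)^{-2}\) for \(j\ge N\). Summing the geometric series \(\sum_{j\ge N}u^{2j+1}=u^{2N+1}/(1-u^2)\) gives the bound.

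\emph{Expected obstacle.} The main risk is the algebra in Step 2. One must verify carefully that the paper's radical expression for \(q\) is the root of \(q^2(1+i)-2q+(1+i)=0\) of modulus less than one, with the principal square root. One must also confirm the closed form of \(u=|q|^2\). I would first check both numerically, then verify symbolically by computing \(q\bar q\) and squaring.
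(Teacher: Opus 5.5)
Your proof is correct and is essentially the paper's argument. The paper pairs $x$ with $-x$ in \eqref{eq:original} and sets $x=\cos\theta$ to get $\int_0^\pi\ell(\cos\theta)^2\dd\theta$, where $\ell$ is the logarithm in \eqref{eq:original}. The Joukowski factorization then expands $\ell(\cos\theta)$ in odd cosines with coefficients $8\operatorname{Re}q^{2j+1}/(2j+1)$, and Parseval plus the same geometric tail bound finish; this is your computation after $\phi=\pi/2-\theta$.

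One correction to your planned check of $q$: the root of $2q/(1+q^2)=1+i$ inside the disk satisfies $q+q^{-1}=1-i$, so it is the complex conjugate of the displayed $q$ (about $0.257+0.529i$ versus $0.257-0.529i$). This is harmless, since only $\operatorname{Re}q^{2j+1}$, $|q|$ and $\operatorname{Re}\chi_2(q^2)$ enter. Alternatively, start from $L=4\operatorname{Re}\operatorname{artanh}((1-i)\sin\phi)$ to land on the displayed root directly.
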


\begin{proof}
Put
\[
 \ell(x)=\log\frac{2x^2+2x+1}{2x^2-2x+1}.
\]
Pairing \(x\) and \(-x\) in \eqref{eq:original}, followed by
\(x=\cos\theta\), gives
\[
 \Psi(1/2,2)=\int_0^\pi \ell(\cos\theta)^2\dd\theta.
\]
The number \(q\) in \eqref{eq:q-u-z} is the root inside the unit disk of
\((1+i)/2=(q+q^{-1})/2\).  Joukowski factorization and the uniformly
absolutely convergent logarithmic series give
\[
 \ell(\cos\theta)=8\sum_{j\geq0}
 \frac{\operatorname{Re}q^{2j+1}}{2j+1}
 \cos((2j+1)\theta).
\]
Parseval's identity proves \eqref{eq:Psi-half-positive}.  Since
\[
 \{\operatorname{Re}q^k\}^2=\frac{u^k+\operatorname{Re}z^k}{2},
\]
summing over odd \(k\) proves \eqref{eq:Psi-half-two}.  Finally,
\(|\operatorname{Re}q^{2j+1}|^2\leq u^{2j+1}\); comparison with the
geometric tail gives \eqref{eq:Psi-half-tail}.
\end{proof}

\end{document}